\documentclass[11pt]{article}
\usepackage[numbers,sort&compress]{natbib}
\usepackage{enumerate}
\usepackage{tikz}
\usepackage{pgfplots}
\usepackage{amscd}
\usepackage{amsmath}
\usepackage{latexsym}
\usepackage{amsfonts}
\usepackage{amssymb}
\usepackage{amsthm}
\usepackage{verbatim}
\usepackage{mathrsfs}
\usepackage{enumerate}
\usepackage{hyperref}

\oddsidemargin .5cm \evensidemargin .5cm \marginparwidth 40pt
\marginparsep 10pt \topmargin 0.30cm
\headsep1pt
\headheight 0pt
\textheight 9.1in
\textwidth 6in
\sloppy

\setlength{\parskip}{8pt}

\theoremstyle{plain}\newtheorem{definition}{Definition}[section]
\theoremstyle{definition}\newtheorem{theorem}{Theorem}[section]
\theoremstyle{plain}\newtheorem{lemma}[theorem]{Lemma}
\theoremstyle{plain}\newtheorem{coro}[theorem]{Corollary}
\theoremstyle{plain}\newtheorem{prop}[theorem]{Proposition}
\theoremstyle{remark}\newtheorem{remark}{Remark}[section]
\theoremstyle{definition}
\usepackage{xcolor}

\newcommand{\wred}[1]{\textcolor{black}{#1}}
\newcommand{\wwred}[1]{\textcolor{black}{#1}}

\newcommand{\norm}[1]{\left\|#1\right\|}

\newcommand{\B}{\Big}
\newcommand{\D}{\text{Div\,}}

\newcommand{\be}{\begin{equation}}
\newcommand{\ee}{\end{equation}}
\newcommand{\ba}{\begin{aligned}}
	\newcommand{\ea}{\end{aligned}}

\providecommand{\bysame}{\leavevmode\hbox to3em{\hrulefill}\thinspace}
\newcommand{\f}{\frac}

\newcommand{\ben}{\begin{enumerate}}
	\newcommand{\een}{\end{enumerate}}

\newcommand{\Rmnum}[1]{\expandafter\@slowromancap\romannumeral #1@}

\allowdisplaybreaks

\numberwithin{equation}{section}
%%%%%%%%%%%%%%%%%%%%%%%%%%%%%%%%%%%%%%%%%%%%%%%%%%%%%%%%%%%%%%%%%%%%%%%%%%%%%%%%%%%%%%%%%%%%%%%%%%%%
\begin{document}
	%%%%%%%%%%%%%%%%%%%%%%%%%%%%%%%%%%%%%%%%%%%%%%%%%%%%%%%%%%%%%%%%%%%%%%%%%%%%%%%%%%%%%%%%%%%%%%%%%%%%
	\title{Fractal dimension of potential singular points set in the Navier-Stokes equations under supercritical    regularity }
	\author{Yanqing Wang\footnote{ Department of Mathematics and Information Science, Zhengzhou University of Light Industry, Zhengzhou, Henan  450002,  P. R. China Email: wangyanqing20056@gmail.com}\; ~~and~~~Gang Wu\footnote{Corresponding author. School of Mathematical Sciences,  University of Chinese Academy of Sciences, Beijing 100049, P. R. China Email: wugang2011@ucas.ac.cn}
 }
\date{}
\maketitle

\begin{abstract}
The main objective of this paper is  to answer the questions posed by Robinson and   Sadowski \cite[p. 505, Comm. Math. Phys., 2010]{[RS3]} for the Navier-Stokes equations.
Firstly,  we prove that
the upper box dimension of  the  potential singular points set $\mathcal{S}$ of   suitable weak solution $u$ belonging in $  L^{q}(0,T;L^{p}(\mathbb{R}^{3}))$ for $1\leq\f{2}{q}+\f{  3}{p}\leq\f32$ with $2\leq q<\infty$ and $2<p<\infty$ is at most  $\max\{p,q\}(\f{2}{q}+\f{  3}{p}-1)$ in this system. Secondly, it is shown that
$1-2 s$ dimension  Hausdorff measure  of potential singular points set of suitable  weak solutions
satisfying  $  u\in L^{2}(0,T;\dot{H}^{s+1}(\mathbb{R}^{3}))$ for $0\leq s\leq\f12$   is zero, whose proof relies on   Caffarelli-Silvestre's extension.  Inspired by Baker-Wang's recent work \cite{[BW]}, this further allows us to discuss the Hausdorff dimension of  potential singular points set of  suitable weak solutions if the gradient of the velocity under some supercritical    regularity.

  \end{abstract}
 	\noindent {\bf MSC(2000):}\quad 35B65, 35D30, 76D05 \\\noindent
	 {\bf Keywords:}  Navier-Stokes equations;   singular points;  Hausdorff dimension; Box dimension % criteria
	%%%%%%%%%%
	\section{Introduction}
	\label{intro}
	\setcounter{section}{1}\setcounter{equation}{0}
We consider the  three-dimensional incompressible non-stationary Navier-Stokes equations
\be\label{NS}\left\{\ba
&u_{t}- \Delta u+u\cdot \nabla u+ \nabla \Pi =0, ~~\text{div}\, u=0~~\text{in}~~~~\mathbb{R}^{3}\times(0,\,T), \\
&u|_{t=0}=u_{0}(x)~~~~~~~~~\text{on} ~~~ \mathbb{R}^{3}\times\{t=0\}.
\ea\right.\ee
 Here $u$ describes the  velocity  of the flow and the
scalar function $\Pi$ represents the pressure of the fluid.
The  initial data   $u_{0}(x)$ satisfies divergence free condition.

The full regularity of solutions of  the three dimensional  Navier-Stokes equations
is not known, the partial regularity theory of suitable weak solutions  of this system  starting  from Scheffer's work \cite{[Scheffer1],[Scheffer2],[Scheffer3]} is well-known. The famous Caffarelli-Kohn-Nirenberg theorem in \cite{[CKN]}
 is that   one
dimensional Hausdorff measure of the potential space-time singular points set $\mathcal{S}$ of suitable
weak solutions to the 3D Navier-Stokes equations is zero.
The critical tool is the following   so-called $\epsilon$-regularity  criterion: there is an absolute constant $\epsilon$ such that if
   \be\label{ckn}
  \limsup_{\varrho\rightarrow0}\f{1}{\varrho}\iint_{Q(\varrho)}|\nabla u|^{2}dxdt
  \leq \epsilon,
  \ee
  then  $u$ is bounded in a neighborhood of $(0,0)$,   where
   $Q(\varrho):=B(\varrho)\times(-\varrho^{2},0)$ and $B(\varrho)$
   denotes the ball of center $0$ and radius $\varrho$. To this end, Caffarelli-Kohn-Nirenberg  \cite{[CKN]}  established $\epsilon$-regularity  criterion at one scale below
\begin{equation}
\label{CKN}		
\norm{u}_{L^{3}(Q(1))}+\|u\Pi\|_{L^1(Q(1))}+\|\Pi\|_{L^{1,5/4}(Q(1))}\leq \epsilon. \end{equation}
 An alternative  approach  of Caffarelli-Kohn-Nirenberg theorem  based on blow-up argument   was duo to
Lin, Ladyzhenskaya and Seregin \cite{[LS],[Lin]}, where the corresponding $\epsilon$-regularity  criterion at one scale reads
\begin{equation}
\label{Lin}
\norm{u}_{L^{3}(Q(1))}+\|\Pi\|_{L^{3/2}(Q(1))} \leq\epsilon.
\end{equation}
In what follows, a point $z=(x,t)$ in \eqref{NS} is said to be regular  if $u$ belongs to $ L^{\infty}$ at a neighborhood of $z$.
Otherwise, it is called singular. The estimation of the  size of potential singular points set in 3D the Navier-Stokes equations can be found in \cite{[WW],[WZ],[RS1],[RS2],[RS3],[Kukavica],[KP],[GKT],[BW]}.

On the other hand,
 the   integral (Serrin) type conditions based on the  velocity, the  gradient of the velocity or  the pressure leads to the
the full regularity of Leray-Hopf weak solutions of the 3D Navier-Stokes equations.
 Precisely,
a weak solution $u$ is smooth on $(0,T]$ if it satisfies  one of the following three  conditions
  \begin{enumerate}[(1)]
 \item Serrin \cite{[Serrin]}, Struwe \cite{[Struwe]}, Escauriaza,  Seregin and  \v{S}ver\'{a}k \cite{[ESS]}
  \be \label{serrin1}
  u\in  L^{p} (0,T;L^{q}( \mathbb{R}^{3})) ~~~ \text{with}~~~~2/p+3/q=1, ~~q\geq3.
  \ee
 \item  Beirao da Veiga \cite{[Beirao da Veiga]}
  \be \label{serrin2}
  \nabla u\in  L^{p} (0,T;L^{q}( \mathbb{R}^{3})) ~~~ \text{with}~~~~2/p+3/q=2 , ~~q>3/2.
  \ee
 \item  Berselli and  Galdi \cite{[BG]},     Zhou \cite{[Zhou1],[Zhou2]}
  \be \label{serrin3}
 \Pi \in  L^{p} (0,T;L^{q}( \mathbb{R}^{3})) ~~~ \text{with}~~~~2/p+3/q=2 , ~~q>3/2.
  \ee

  \end{enumerate}
 The aforementioned  integral (Serrin) type conditions can be seen as the critical regularity, which is scale  invariant   under the natural scaling of the Navier-Stokes equations \eqref{NS}. The full regularity means that the   set   of $\mathcal{S}$ is empty.  The  natural (supercritical) regularity $u \in L^{q}(\wred{0,\,T};\,L^{p}(\mathbb{R}^{3}))  $ with $\f2q+\f3p=\f32$ in suitable weak solutions means that
 \be\label{clascilresult}
\dim_{H}(\wred{\mathcal{S}})\leq 1~~\text{and}~   \dim_{B}(\wred{\mathcal{S}})\leq5/3,
\ee
which can be found in \cite{[CKN],[RS1]} and  $\dim_{H}(S)$ and $\dim_{B}(S)$ denote the Hausdorff dimension and box dimension of a set $S$, respectively.
 A natural question is weather the suitable weak solutions satisfying  supercritical  regularity $u \in L^{q}(0,\, T;\,L^{p}(\mathbb{R}^{3}))  $ with $1<\f2q+\f3p<\f32$ lower the fractal  dimension in \eqref{clascilresult}. In this direction,
 Gustafson,  Kang and   Tsai  \cite{[GKT]} proved that
   the Hausdorff  dimension of  the potential singular  points set   $\mathcal{S}$  of a Leray-Hopf weak solution belonging in $u\in L^{q}(0,T;L^{p}(\mathbb{R}^{3}))$ for $1\leq\f{2}{q}+\f{  3}{p} $ with $\f2p+\f2q<1$ and $\f3p+\f1q<1$  is at most
   $3-q+\f{2q}{p}, p>q$ or $2-q+\f{3q}{p},  p\leq q$.
Robinson and  Sadowski \cite{[RS3]} showed  that the upper box  dimension of    potential singular points set $\mathcal{S}$ of   a suitable  weak solution belonging in $u\in L^{q}(0,T;L^{p}(\mathbb{R}^{3}))$ for $1\leq\f{2}{q}+\f{  3}{p} \leq \f32$ with $3<p,q<\infty$    is   no greater than
\be\label{fir1b}
\max\{p,q\}(\f{2}{q}+\f{  3}{p}-1). \ee
In addition,   the   Hausdorff dimension of    potential singular  points set  $\mathcal{S}$ of   a suitable  weak solution belonging in $\nabla u\in L^{q}(0,T;L^{p}(\mathbb{R}^{3}))$ for $2\leq\f{2}{q}+\f{  3}{p} \leq \f52$ with $2<p\leq q<\infty$    is   less than or equal to
\be\label{sec2h}
\max\{p,q\}(\f{2}{q}+\f{  3}{p}-2).
\ee
In \cite[Conclusion, Page 9]{[RS3]},
Robinson and  Sadowski
mentioned
some natural questions    from their results :
  \begin{enumerate}[(1)]
 \item
  It would be interesting to relax the
assumption  $q> 3$  in \eqref{fir1b} and obtain the same bound for any $q \geq 2$; \item  similarly
in \eqref{sec2h} one would like to relax the condition $q\geq p$.
 \item
In order to obtain \eqref{fir1b}  in a bounded domain we would require the analogue of Lemma 2 (estimates for the
pressure when $u\in L^{q}(0,T;L^{p}(\Omega))$.
 \item  An order of magnitude harder is to determine whether any of these partial regularity
results can be proved for general weak solutions, and not only suitable weak solutions.
\end{enumerate}
\wred{In this paper, our first result is the following theorem.}
\begin{theorem}\label{the1.1}
Let $u$ be a suitable weak solution belonging in $u\in L^{q}(0,T;L^{p}(\mathbb{R}^{3}))$ for $1\leq\f{2}{q}+\f{  3}{p}\leq\f32$ with $2\leq q<\infty$  and $2<p<\infty$. Then, the upper box dimension of  its potential singular points set   $\mathcal{S}$   is at most  $\max\{p,q\}(\f{2}{q}+\f{  3}{p}-1)$.
		\end{theorem}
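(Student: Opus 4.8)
The plan is to combine a scale-invariant local regularity criterion tailored to the mixed norm $L^q_tL^p_x$ with a Vitali-type covering of $\mathcal S$ by parabolic cylinders. Write $\alpha=\frac2q+\frac3p\in[1,\frac32]$ and, for $z_0=(x_0,t_0)$, set
\begin{equation}\label{plan-A}
\mathcal A(z_0,r)=\left(\int_{t_0-r^2}^{t_0}\left(\int_{B(x_0,r)}|u(x,t)|^p\,dx\right)^{q/p}dt\right)^{1/q},
\end{equation}
so that $r^{1-\alpha}\mathcal A(z_0,r)$ is invariant under the natural scaling of \eqref{NS}. The first step is to establish an $\epsilon$-regularity statement: there is an absolute $\epsilon_0>0$ such that if $r^{1-\alpha}\mathcal A(z_0,r)\le\epsilon_0$ for some small $r$, then $z_0$ is regular. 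Taking the contrapositive, at every $z_0\in\mathcal S$ one has, for all sufficiently small $r$,
\begin{equation}\label{plan-lb}
\int_{t_0-r^2}^{t_0}\left(\int_{B(x_0,r)}|u|^p\,dx\right)^{q/p}dt>\epsilon_0^{\,q}\,r^{q(\alpha-1)}.
\end{equation}

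Granting \eqref{plan-lb}, the second step is the counting. I would cover $\mathcal S$ by $\{Q(z,r):z\in\mathcal S\}$ and extract, by the parabolic Vitali lemma, a finite disjoint subfamily $Q(z_1,r),\dots,Q(z_N,r)$ whose fivefold dilations still cover $\mathcal S$. Disjointness of the cylinders forces, at each fixed time $t$, the spatial balls $\{B(x_i,r):t_i-r^2<t<t_i\}$ to be pairwise disjoint, so that $\sum_i\mathbf 1_{(t_i-r^2,t_i)}(t)\int_{B(x_i,r)}|u|^p\,dx\le\int_{\mathbb R^3}|u(\cdot,t)|^p\,dx$. Summing \eqref{plan-lb} over $i$ and integrating in $t$ gives
\begin{equation}\label{plan-sum}
N\,\epsilon_0^{\,q}\,r^{q(\alpha-1)}<\sum_{i=1}^N\int_{t_i-r^2}^{t_i}\left(\int_{B(x_i,r)}|u|^p\,dx\right)^{q/p}dt.
\end{equation}
To bound the right-hand side by the finite norm $\|u\|_{L^q(0,T;L^p)}^q$, I would split according to $\max\{p,q\}$. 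If $q\ge p$, the elementary inequality $\sum_i a_i^{s}\le(\sum_i a_i)^{s}$ for $s=q/p\ge1$ bounds the inner sum at fixed $t$ by $(\int_{\mathbb R^3}|u|^p)^{q/p}$, whence $N\lesssim r^{-q(\alpha-1)}$. If $q<p$, I would instead use the power-mean inequality $\sum_{i=1}^{M_t}a_i^{q/p}\le M_t^{\,1-q/p}(\sum_i a_i)^{q/p}$ with $M_t\le N$; after integration this yields $N\,\epsilon_0^{\,q}r^{q(\alpha-1)}\lesssim N^{\,1-q/p}\|u\|_{L^q(0,T;L^p)}^q$, hence $N\lesssim r^{-p(\alpha-1)}$. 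In both cases $N\lesssim r^{-\max\{p,q\}(\alpha-1)}$. Since each $Q(z_i,5r)$ has Euclidean diameter $\le Cr$ for $r\le1$, the dilated family covers $\mathcal S$ by $N\lesssim r^{-\max\{p,q\}(\alpha-1)}$ sets of diameter $\le Cr$, and the definition of upper box dimension gives $\dim_B\mathcal S\le\max\{p,q\}(\frac2q+\frac3p-1)$.

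The main obstacle is the first step: proving the scale-invariant $L^q_tL^p_x$ $\epsilon$-regularity criterion for the \emph{full} range $2\le q<\infty$, $2<p<\infty$. This range is exactly the one that is \emph{not} reachable by Hölder's inequality from the $L^3$--$L^{3/2}$ criterion \eqref{Lin}, since that reduction would reimpose $p,q\ge3$, which is precisely the restriction in the bound \eqref{fir1b} of \cite{[RS3]} we wish to remove. The place where low exponents can still be handled is the pressure. On the whole space the Calderón--Zygmund representation $\Pi=R_iR_j(u_iu_j)$ gives the pointwise-in-time bound $\|\Pi(\cdot,t)\|_{L^{p/2}}\lesssim\|u(\cdot,t)\|_{L^p}^2$ and hence the global estimate $\|\Pi\|_{L^{q/2}(0,T;L^{p/2})}\lesssim\|u\|_{L^q(0,T;L^p)}^2$, valid for \emph{all} $p>2$ and $q\ge2$. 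This replaces the pressure estimate (Lemma 2 of \cite{[RS3]}) that forced $q>3$ there and explains why the bounded-domain question remains harder. Combined with the local energy inequality for suitable weak solutions and an iteration on shrinking cylinders within the Caffarelli--Kohn--Nirenberg framework \eqref{ckn}, this global pressure bound is what I expect to deliver \eqref{plan-lb} down to $p>2$, $q\ge2$. I therefore anticipate that the technical heart of the argument lies in this local pressure decomposition and the attendant iteration, with the covering and summation steps above amounting to bookkeeping.
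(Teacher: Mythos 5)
Your proposal follows essentially the same route as the paper: a one-scale $\epsilon$-regularity criterion in the mixed norm $L^{q}_{t}L^{p}_{x}$, with the pressure controlled globally via the Calder\'on--Zygmund representation $\Pi=R_iR_j(u_iu_j)$ so that $\|\Pi\|_{L^{q/2}(0,T;L^{p/2})}\lesssim\|u\|_{L^{q}(0,T;L^{p})}^{2}$, followed by a covering/counting step that uses $\sum_i a_i^{q/p}\le(\sum_i a_i)^{q/p}$ when $q\ge p$ and $\sum_i a_i^{q/p}\le N^{1-q/p}(\sum_i a_i)^{q/p}$ when $q<p$ — exactly how the paper produces the $\max\{p,q\}$ exponent (the paper argues by contradiction with $\delta_j$-separated points rather than a Vitali subfamily, which is only a cosmetic difference). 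The one substantive remark is that the ``main obstacle'' you identify is not proved in the paper either but imported from \cite{[HWZ]} as Proposition \ref{hwz}, valid for $1\le 2/q+3/p<2$ with $1\le p,q\le\infty$, and its smallness hypothesis is $\|u\|_{L^{q,p}(Q(1))}+\|\Pi\|_{L^{1}(Q(1))}<\varepsilon$ — i.e.\ it includes a local $L^{1}$ pressure term, which your global pressure bound absorbs at no cost in the counting; you should state your criterion in that form, since a one-scale criterion in terms of $u$ alone for this full range is a genuinely harder (and unnecessary) claim.
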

\begin{remark}
Theorem \ref{the1.1} answers Robinson and  Sadowski's first question (1).
\end{remark}
As observed in  \cite{[GKT]}, the weak solutions in spaces  $L^{q}(0,T;L^{p}(\mathbb{R}^{3}))$   with $\f2p+\f2q<1$ and $\f3p+\f1q<1$ are suitable weak solutions. Therefore,
towards the Robinson and  Sadowski's fourth question (4), we have
\begin{coro}\label{coro1.2} Let $u$ be a Leray-Hopf weak solution belonging in $u\in L^{q}(0,T;L^{p}(\mathbb{R}^{3}))$ for $1\leq\f{2}{q}+\f{  3}{p}\leq\f32$ with $\f2p+\f2q<1$ and $\f3p+\f1q<1$. Then, the upper box dimension of  its potential singular \wred{points set}  $\mathcal{S}$   is at most  $\max\{p,q\}(\f{2}{q}+\f{  3}{p}-1)$.
\end{coro}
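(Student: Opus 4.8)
The plan is to obtain Corollary~\ref{coro1.2} as a direct consequence of Theorem~\ref{the1.1}. Because Theorem~\ref{the1.1} already contains the entire box-counting argument, the only two points that need to be checked are (i) that the exponents $(p,q)$ lie in the admissible window $2<p<\infty$, $2\le q<\infty$ demanded by that theorem, and (ii) that a Leray--Hopf weak solution satisfying the extra hypotheses $\f2p+\f2q<1$ and $\f3p+\f1q<1$ is in fact a \emph{suitable} weak solution, so that Theorem~\ref{the1.1} applies without change. Granting these, the bound $\max\{p,q\}\big(\f2q+\f3p-1\big)$ is exactly the conclusion of Theorem~\ref{the1.1}.

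First I would dispose of the range of exponents, which is elementary. From $\f2p+\f2q<1$ we get $\f1p<\f12$ and $\f1q<\f12$, hence $p>2$ and $q>2$; in particular $2\le q$. Finiteness follows by combining with $1\le\f2q+\f3p$: if $p=\infty$ then $\f2q\ge1$ forces $q\le2$, contradicting $q>2$, so $p<\infty$; and if $q=\infty$ then $\f2q+\f3p\ge1$ gives $p\le3$ while $\f3p+\f1q<1$ gives $p>3$, a contradiction, so $q<\infty$. Thus $2<p<\infty$ and $2<q<\infty$, which sits inside the hypotheses of Theorem~\ref{the1.1}.

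The substantive point, and the only step I expect to present any difficulty, is the suitability in (ii): for a \emph{general} Leray--Hopf solution the local energy inequality is not part of the definition and must be derived under the present supercritical integrability alone. Following the observation of Gustafson--Kang--Tsai \cite{[GKT]}, the strategy is to justify testing the momentum equation against $u\varphi$ (via a time/space mollification that is then passed to the limit), and to verify that every term produced this way is finite. A Leray--Hopf solution already belongs to $L^{\infty}(0,T;L^2)\cap L^2(0,T;\dot H^1)$, so $\nabla u\in L^2(Q)$ and, after interpolation, $u\in L^{10/3}(Q)$; writing the pressure as a sum of second Riesz transforms of $u_iu_j$, the Calder\'on--Zygmund estimate upgrades this to $\Pi\in L^{q/2}(0,T;L^{p/2}(\R^3))$ once $p>2$. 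One then checks, by H\"older's inequality interpolated against the energy bounds, that the nonlinear term $\iint |u|^2\,u\cdot\nabla\varphi$ and the pressure term $\iint \Pi\, u\cdot\nabla\varphi$ are integrable and that the mollification error vanishes in the limit; the two conditions $\f2p+\f2q<1$ and $\f3p+\f1q<1$ are precisely what make this pressure--velocity pairing and the limiting procedure legitimate. With the local energy inequality in hand, $u$ is a suitable weak solution, and Theorem~\ref{the1.1} delivers the asserted box-dimension bound, finishing the proof.
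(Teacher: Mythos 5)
Your overall strategy coincides with the paper's: reduce Corollary \ref{coro1.2} to Theorem \ref{the1.1} by showing that the extra conditions $\f2p+\f2q<1$ and $\f3p+\f1q<1$ force the Leray--Hopf solution to be suitable, and your elementary verification that these conditions place $(p,q)$ strictly inside the window $2<p,q<\infty$ is correct (and is a point the paper passes over in silence). The one step where you diverge is the justification of suitability. The paper does not rerun the Gustafson--Kang--Tsai mollification argument: it interpolates the hypothesis $u\in L^{q}(0,T;L^{p})$ against the Leray--Hopf energy class to conclude $u\in L^{4}(0,T;L^{4}(\mathbb{R}^{3}))$, and then invokes Taniuchi's result on the (local) energy equality for $L^4L^4$ weak solutions to obtain suitability in one stroke. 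You instead sketch the direct route: mollify, test against $u\varphi$, control the cubic term and the pressure--velocity pairing via Calder\'on--Zygmund and H\"older, and pass to the limit. Both routes are legitimate and both ultimately rest on the same integrability thresholds; the paper's citation-based argument is shorter and cleanly offloads the analytic work to \cite{[Taniuchi]}, while your version is self-contained in spirit but is only a plan --- the assertion that ``the two conditions are precisely what make this pairing and the limiting procedure legitimate'' is exactly the claim that would need to be checked with explicit exponents, so as written it is a believable outline rather than a complete proof of the suitability step. If you prefer not to carry out that bookkeeping, the $L^4L^4$ interpolation plus Taniuchi is the faster finish.
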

\wred{With a slight modification of} the proof of Theorem \ref{the1.1} and using the $\epsilon$-regularity  criterion at one scale without pressure in \cite{[WWZ]}, we \wred{can obtain}
a parallel result of \eqref{fir1b} in a bounded domain, which \wred{is corresponding} to
   Robinson and  Sadowski's \wred{third} issue.
\begin{theorem}\label{the1.2}
Let $u$ be a suitable weak solution belonging in $u\in L^{q}(0,T;L^{p}(\Omega))$ for $1\leq\f{2}{q}+\f{  3}{p}\leq\f32$ with $\f52<q,p<\infty$. Then, the upper box dimension of  its potential singular  points set   $\mathcal{S}$   is at most  $\max\{p,q\}(\f{2}{q}+\f{  3}{p}-1)$.
		\end{theorem}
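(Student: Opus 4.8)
\emph{The plan.} I would transplant the proof of Theorem~\ref{the1.1} to the bounded domain $\Omega$, the only essential change being the $\epsilon$-regularity mechanism. On $\mathbb{R}^3$ the pressure is recovered from $u$ through the Riesz transforms, $\Pi=\sum_{i,j}R_iR_j(u_iu_j)$, so $\|\Pi\|_{L^{q/2}(L^{p/2})}\lesssim\|u\|_{L^{q}(L^{p})}^2$, which feeds the pressure-dependent criteria \eqref{CKN}--\eqref{Lin}. In a bounded domain this global Calder\'on--Zygmund bound is destroyed by the boundary-induced harmonic part of the pressure, so I would instead invoke the pressure-free one-scale criterion of \cite{[WWZ]}: there is an absolute $\epsilon_0>0$ and an exponent $m_0\le 5/2$ such that
\[
\frac{1}{\varrho^{\,5-m_0}}\iint_{Q(\varrho)}|u|^{m_0}\,dx\,dt\le\epsilon_0
\]
forces the base point $z_0$ to be regular. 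Taking the contrapositive at a singular point and estimating the left-hand side by H\"older in space and in time (which is exactly where $m_0\le\min\{p,q\}$, hence the hypothesis $p,q>5/2$, is used) yields the scale-free lower bound
\[
\int_{t_0-\varrho^2}^{t_0}\Big(\int_{B(x_0,\varrho)}|u|^p\,dx\Big)^{q/p}dt\;>\;c\,\varrho^{\,q\beta},\qquad \beta:=\frac{2}{q}+\frac{3}{p}-1,
\]
valid for every singular $z_0=(x_0,t_0)\in\mathcal{S}$ and every small $\varrho$; note the resulting bound is independent of $m_0$.

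\emph{The covering and counting.} With this lower bound the box-counting is as in Theorem~\ref{the1.1}. Fix $\varrho$, and by a parabolic Vitali lemma extract a disjoint family $\{Q(z_i,\varrho)\}_{i\in I}$ whose $5$-times dilates cover $\mathcal{S}$; set $N(\varrho)=\#I$. Summing the lower bound gives $N(\varrho)\,c\,\varrho^{q\beta}\le\sum_i\int_{I_i}(\int_{B_i}|u|^p)^{q/p}dt$, and the task is to dominate the right side by $\|u\|_{L^{q}(L^{p})}$. When $q\ge p$ the inner exponent $q/p\ge1$, so at each fixed time the disjointness of the balls $B_i$ and the superadditivity inequality $\sum a_i^{q/p}\le(\sum a_i)^{q/p}$ give $\sum_i\|u\|_{L^{q}(L^{p})(Q_i)}^{q}\le\|u\|_{L^{q}(L^{p})}^{q}$, whence $N(\varrho)\lesssim\varrho^{-q\beta}$. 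When $p\ge q$ I would group the cylinders into time-slabs of width $\varrho^2$: within one slab the disjoint cylinders have disjoint spatial balls, and the reverse power-mean inequality $\sum a_i^{q/p}\le n^{1-q/p}(\sum a_i)^{q/p}$, followed by H\"older across the $\sim T/\varrho^2$ slabs (using $p/q\ge1$), yields $N(\varrho)\lesssim\varrho^{-p\beta}$. In either case $N(\varrho)\lesssim\varrho^{-\max\{p,q\}\beta}$. Since each thin cylinder $Q(z_i,\varrho)$ has Euclidean diameter $\approx\varrho$ and so lies in $O(1)$ Euclidean $\varrho$-balls, the same bound holds for the Euclidean covering number, giving $\dim_{B}\mathcal{S}\le\max\{p,q\}\beta=\max\{p,q\}(\frac{2}{q}+\frac{3}{p}-1)$.

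\emph{The main obstacle.} The genuinely new point is the first step: securing an $\epsilon$-regularity criterion on $\Omega$ that does not see the pressure, since the interior pressure estimates underlying the $\mathbb{R}^3$ argument of Theorem~\ref{the1.1} fail near $\partial\Omega$. This is precisely what forces the more restrictive range $p,q>5/2$ (against $q\ge2,\,p>2$ in Theorem~\ref{the1.1}): the criterion of \cite{[WWZ]} only controls a local integral of $|u|$ to a power $m_0$ around $5/2$, and the H\"older reduction to $L^{q}(L^{p})$ needs $m_0\le\min\{p,q\}$. A secondary nuisance is the off-diagonal summation $p\neq q$, resolved above by the time-slab decomposition together with the two power-mean inequalities; the Vitali/dilation constants and the passage from parabolic to Euclidean box dimension are then routine.
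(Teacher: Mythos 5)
Your proposal is correct and follows essentially the same route as the paper: the published proof of this theorem is literally ``repeat the counting argument of Theorem \ref{the1.1}, replacing the pressure-dependent one-scale criterion \eqref{optical} by the pressure-free criterion \eqref{wwz} of \cite{[WWZ]}'', which is exactly the strategy you carry out (and you correctly identify the pressure-free criterion as the one genuinely new ingredient forced by the bounded domain). The only slip is cosmetic: the exponent in \eqref{wwz} is $\frac{5}{2}+\delta$ with $\delta>0$, not $m_0\le\frac{5}{2}$ as written, which is precisely why the strict constraint $p,q>\frac{5}{2}$ appears; your subsequent reasoning already reflects this.
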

Roughly, the following   figures summarize the known upper box dimension of  its potential singular  points set   $\mathcal{S}$  of suitable weak solutions under supercritical    regularity
in the Navier-Stokes equations.

\begin{tikzpicture}[scale=2.3,>= stealth]
%\draw [black,domain=90:180,fill=gray!30] plot ({cos(\x)}, {sin(\x)});
% \draw [black,domain=90:180] plot ({cos(\x)}, {sin(\x)});
% \draw[color=red,fill=gray!30] (0,1) arc (90:180:1);
  %\filldraw[blue,line width=2](0,0)-- (-1,0) arc (90:180:1);
  %\fill[gray!30] (1.5,1.5) -- ( 6,6)-- ( 6,1)--(1.5,1);
  %\draw[fill=gray!30] (0,1)  arc ( 90:180:1);
   \fill[gray!30](1/9, 1/3)--(1/3,0 )--(1/3,1/4)--(5/18,1/3);
 \filldraw  (1/3,0)--(0,0.5 );
%\draw[->] (-2.2,0)--(2.5,0)node[right]{\tiny{$x$}};
\filldraw  (1/2,0)--(0,3/4);
 \filldraw  (1/3,0)--(1/3,1/4 );
  \filldraw  (1/9,  1/3)--(5/18,1/3 );
 \node[below] at ( 1/3,0) {\tiny{$\frac{1}{3}$}};
 \node[left] at (0,1/2) {\tiny{$\frac{1}{2}$}};
 \node[left] at (0,3/4) {\tiny{$\frac{3}{4}$}};
  \node[left] at (1/9,1/3) {\tiny{$(\f19,\f13) $}};
    \node[right] at (0.22 ,0.37) {\tiny{$(\f{5}{18},\f13) $}};
      \node[right] at (1/3,0.22) {\tiny{$(\f13,\f14) $}};
      \node[below] at (1/2,0) {\tiny{$ \f12 $}};
\draw[->] (0,0)--(1.05,0)node[right]{\tiny{$\frac{1}{p}$}};
% \node[below] at ( -0.15,-0.05) {\tiny{0}};
\foreach \x in { 1,...,1.05}
     		\draw (\x,0) -- (\x,-.01)
		node[anchor=north] {\tiny{\x}};
\foreach \x in {1,...,1.2}     		\draw (\x,0) -- (\x,-.01)		node[anchor=north] {\tiny{\x}};
\draw[->](0,0)--(0,1.05)node[above]{\tiny{$\frac{1}{q}$}};
%\draw[->](0,-2.2)--(0,2.5)node[above]{\tiny{$y$}};
 \foreach \y in {0,...,1 }
     		\draw (0,\y) -- (.01,\y)
	 	node[anchor=east] {\tiny{\y}};
%\foreach \y in {1,...,2}
 %    		\draw (0,\y) -- (.1,\y)
%		node[anchor=east] {\tiny{\y}};
 \node[below] at (0.5,-0.2) {\tiny{Fig. 1:  Robinson-Sadowski  }};
 \node[below] at (0.5,-0.35) {\tiny{  results on $\mathbb{R}^{3}$}};
\end{tikzpicture}
\begin{tikzpicture}[scale=2.3,>= stealth]
 \filldraw  (1/3,0)--(0,0.5 );
 \filldraw  (0,1/2)--(1/6,1/2  );
\filldraw  (1/2,0)--(0,3/4);
 \node[below] at ( 1/3,0) {\tiny{$\frac{1}{3}$}};
 \node[left] at (0,1/2) {\tiny{$\frac{1}{2}$}};
 \node[left] at (0,3/4) {\tiny{$\frac{3}{4}$}};
  \node[below] at (1/2,0) {\tiny{$ \f12 $}};
\draw[->] (0,0)--(1.05,0)node[right]{\tiny{$\frac{1}{p}$}};
% \node[below] at ( -0.15,-0.05) {\tiny{0}};
\foreach \x in {1,...,1.05}
     		\draw (\x,0) -- (\x,-.01)
		node[anchor=north] {\tiny{\x}};
\foreach \x in {1,...,1.2}     		\draw (\x,0) -- (\x,-.01)		node[anchor=north] {\tiny{\x}};
\draw[->](0,0)--(0,1.05)node[above]{\tiny{$\frac{1}{q}$}};
%\draw[->](0,-2.2)--(0,2.5)node[above]{\tiny{$y$}};
 \foreach \y in {0,...,1.05}
     		\draw (0,\y) -- (.01,\y)
	 	node[anchor=east] {\tiny{\y}};
\node[below] at (0.5,-0.2) {\tiny{Fig. 2:  Theorem 1.1 }};
 \node[below] at (0.7,-0.35) {\tiny{    on  $\mathbb{R}^{3}$}};
  \fill[gray!30](1/3, 0)--(1/2,0 )--( 1/6,1/2)--(0,1/2);
\end{tikzpicture}\begin{tikzpicture}[scale=2.3,>= stealth]
 \filldraw  (1/3,0)--(0,0.5 );
\filldraw  (1/2,0)--(0,3/4);
\filldraw  (0, 1/2)--(1/4,1/4);
\filldraw  (1/3,0)--(1/4,1/4);
 \node[right] at (1/4,1/4) {\tiny{$(\frac{1}{4},\frac{1}{4})$}};;
 \node[below] at ( 1/3,0) {\tiny{$\frac{1}{3}$}};
 \node[left] at (0,1/2) {\tiny{$\frac{1}{2}$}};
 \node[left] at (0,3/4) {\tiny{$\frac{3}{4}$}};
  \node[below] at (1/2,0) {\tiny{$ \f12 $}};
\draw[->] (0,0)--(1.05,0)node[right]{\tiny{$\frac{1}{p}$}};
% \node[below] at ( -0.15,-0.05) {\tiny{0}};
\foreach \x in { 1,...,1.05}
     		\draw (\x,0) -- (\x,-.01)
		node[anchor=north] {\tiny{\x}};
\foreach \x in {1,...,1.05}     		\draw (\x,0) -- (\x,-.01)		node[anchor=north] {\tiny{\x}};
\draw[->](0,0)--(0,1.05)node[above]{\tiny{$\frac{1}{q}$}};
%\draw[->](0,-2.2)--(0,2.5)node[above]{\tiny{$y$}};
 \foreach \y in {0,...,1.05}
     		\draw (0,\y) -- (.01,\y)
	 	node[anchor=east] {\tiny{\y}};
%\foreach \y in {1,...,2}
 %    		\draw (0,\y) -- (.1,\y)
%		node[anchor=east] {\tiny{\y}};
\node[below] at (0.5,-0.2) {\tiny{Fig. 3: Corollary 1.2 }};
 \node[below] at (0.5,-0.35) {\tiny{    weak solutions}};
  \fill[gray!30](1/3, 0)--(1/4,1/4 )--(0,1/2);
\end{tikzpicture}
\begin{tikzpicture}[scale=2.3,>= stealth]
%\draw [black,domain=90:180,fill=gray!30] plot ({cos(\x)}, {sin(\x)});
% \draw [black,domain=90:180] plot ({cos(\x)}, {sin(\x)});
% \draw[color=red,fill=gray!30] (0,1) arc (90:180:1);
  %\filldraw[blue,line width=2](0,0)-- (-1,0) arc (90:180:1);
  %\fill[gray!30] (1.5,1.5) -- ( 6,6)-- ( 6,1)--(1.5,1);
  %\draw[fill=gray!30] (0,1)  arc ( 90:180:1);
 \filldraw  (1/3,0)--(0,0.5 );
%\draw[->] (-2.2,0)--(2.5,0)node[right]{\tiny{$x$}};
\filldraw  (1/2,0)--(0,3/4);
\filldraw  (2/5,0)--(2/5,3/20);
\filldraw  (7/30,2/5  )--( 1/15, 2/5);
 \node[below] at (1/2,0) {\tiny{$ \f12 $}};
 \node[below] at ( 1/3,0) {\tiny{$\frac{1}{3}$}};
 \node[left] at (0,1/2) {\tiny{$\frac{1}{2}$}};
 \node[left] at (0,3/4) {\tiny{$\frac{3}{4}$}};
    \node[below] at (2/5,0) {\tiny{$ \f25  $}};
        \node[right] at (2/5, 3/20 ) {\tiny{$(\f25,\f{3}{20}) $}};
          \node[right] at (7/30, 2/5 ) {\tiny{$(\f{7}{30}, 2/5) $}};
           \node[left] at (0.1, 2/5 ) {\tiny{$(\f{1}{15},\f25) $}};
\draw[->] (0,0)--(1.05,0)node[right]{\tiny{$\frac{1}{p}$}};
% \node[below] at ( -0.15,-0.05) {\tiny{0}};
\foreach \x in { 1,...,1.05}
     		\draw (\x,0) -- (\x,-.01)
		node[anchor=north] {\tiny{\x}};
\foreach \x in {1,...,1.05}     		\draw (\x,0) -- (\x,-.01)		node[anchor=north] {\tiny{\x}};
\draw[->](0,0)--(0,1.05)node[above]{\tiny{$\frac{1}{q}$}};
%\draw[->](0,-2.2)--(0,2.5)node[above]{\tiny{$y$}};
 \foreach \y in {0,...,1.2}
     		\draw (0,\y) -- (.01,\y)
	 	node[anchor=east] {\tiny{\y}};
%\foreach \y in {1,...,2}
 %    		\draw (0,\y) -- (.1,\y)
%		node[anchor=east] {\tiny{\y}};
\fill[gray!30](1/15, 2/5)--(1/3,0 )--(2/5,0 )--(2/5,3/20)--(7/30,2/5);
 \node[below] at (0.5,-0.2) {\tiny{ Fig. 4:  Theorem 1.3 }};
 \node[below] at (0.5,-0.35) {\tiny{    on bounded domain}};
\end{tikzpicture}
Next, we study the   Robinson and  Sadowski's second issue  involving the gradient of  the velocity with additional regularity.
It seems that this problem is more complicated. Very recently, in the other direction,  Baker and Wang \cite{[BW]} estimate the  Hausdorff dimension of  the singular set  for the Navier-Stokes equations with supercritical assumptions on the pressure.  There are two new   ingredients in their proof. The first one is is the high  regularity of the
solutions with certain \wred{supercritical} assumptions on pressure in  the Navier-Stokes equations. The second one is the $\epsilon$-regularity  criterion in terms of  quantity $|\nabla u|^{2}|v|^{q-2}$ with $2<q<3$, which usually arises in the $L^{p}$ type energy estimates of the Navier-Stokes equations.  In the spirit of \cite{[BW]}, we consider the $\epsilon$-regularity  criterion via  quantity $  \Lambda^{s+1}u$ with $s>0$, which usually appears in the $\dot{H}^{s+1}$   type energy estimates of the Navier-Stokes equations. One naturally invokes  the     Caffarelli-Silvestre extension used in \cite{[RWW],[TY],[CDM]}  to overcome non-local derivatives. However, since $s>0$, one requires   higher order Caffarelli-Silvestre (Yang) extensions \cite{[Yang]}. To this end, we observe that
  that  the following identity due to \cite{[CDM]}, for $\alpha=s+1>1$,
\begin{equation*}
c_\alpha\int_{\mathbb R^4_+} y^{3-2\alpha} |\nabla^{\ast} (\nabla u)^{\ast}|^2 (x,y,t)\, dx\, dy
= \int_{\mathbb R^3} \left|(-\Delta)^{\frac{\alpha-1}{2}} \nabla u\right|^2 (x,t)\, dx=  \int_{\mathbb R^3} |(-\Delta)^\frac{\alpha}{2} u|^2 (x,t)\, dx,
\end{equation*}
that is,
 \be\label{CSK1}
\|u\|^{2} _{\dot{H}^{s+1}}= c_s\int_{\mathbb R^4_+} y^{1-2s} |\nabla^{\ast} (\nabla u)^{\ast}|^2 (x,y,t)\, dx\, dy,\ee
which helps us to reduce the proof of Theorem \ref{the1.4} to show  Theorem \ref{the1.5} just by Caffarelli-Silvestre extension rather than higher order (Yang) extension.
 Theorem \ref{the1.4} can be viewed  as the interpolation between the Caffarelli-Kohn-Nirenberg theorem and  Kozono-Taniuchi regular class
 $L^{2}(0,T; BMO)$, which is   of independent interest.
\begin{theorem}\label{the1.4}
Let $u$ be a suitable weak solution belonging in $  u\in L^{2}(0,T;\dot{H}^{s+1}(\mathbb{R}^{3}))$ for $0\leq s\leq\f12$. Then,  $\mathcal{H}^{ 1-2 s}(\mathcal{S})=0$.
		\end{theorem}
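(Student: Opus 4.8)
The plan is to isolate an $\varepsilon$-regularity criterion tuned to the supercritical quantity $\Lambda^{s+1}u$ and then feed it into the Caffarelli--Kohn--Nirenberg covering scheme with the correct power of the radius. First I would fix the right scale-invariant quantity. Under the natural scaling $u_{\lambda}(x,t)=\lambda u(\lambda x,\lambda^{2}t)$ a direct computation shows that
\[
\rho^{2s-1}\iint_{Q(\rho)}|\Lambda^{s+1}u|^{2}\,dx\,dt
\]
is invariant, and its exponent matches that of the Caffarelli--Kohn--Nirenberg quantity in \eqref{ckn} at $s=0$, while for $s=\tfrac12$ it is simply $\iint_{Q(\rho)}|\Lambda^{3/2}u|^{2}$, which tends to $0$ with $\rho$ (consistent with $\dot{H}^{3/2}\hookrightarrow BMO$ and the Kozono--Taniuchi class). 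Accordingly, the criterion I would prove, which is the content I would state as Theorem \ref{the1.5}, is: there is an absolute $\varepsilon_{\ast}>0$ such that if
\[
\limsup_{\rho\to0}\ \rho^{2s-1}\iint_{Q(\rho)}|\Lambda^{s+1}u|^{2}\,dx\,dt\le \varepsilon_{\ast},
\]
then $(0,0)$ is a regular point.

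To prove this criterion I would use the extension identity \eqref{CSK1}, whose purpose is precisely to convert the nonlocal energy $\|u\|_{\dot{H}^{s+1}}^{2}$ into the local weighted Dirichlet integral $c_{s}\int_{\mathbb{R}^{4}_{+}}y^{1-2s}|\nabla^{\ast}(\nabla u)^{\ast}|^{2}$ of the Caffarelli--Silvestre extension $(\nabla u)^{\ast}$. The key gain is locality in $(x,t)$: the fractional quantity $\Lambda^{s+1}u$ cannot be restricted to a ball, whereas the extended weighted energy can. I would introduce cutoffs in $x$, $t$ and in the extension variable $y$, estimate the weighted energy on a half-cylinder in $\mathbb{R}^{4}_{+}\times\mathbb{R}$, and exploit that $y^{1-2s}$ is an $A_{2}$ Muckenhoupt weight for $0\le s\le\tfrac12$, so that the trace inequality together with a weighted Poincar\'e inequality control $\rho^{-1}\iint_{Q(\rho)}|\nabla u|^{2}$ by the localized extension energy up to tail terms. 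Smallness of the latter then forces \eqref{ckn}, and regularity follows from Caffarelli--Kohn--Nirenberg. I expect the main obstacle to be exactly this localization: handling the boundary contribution on $\{y=0\}$ where the trace back to $\nabla u$ lives, and controlling the nonlocal tails created when one cuts off in $y$ and in space.

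Granting the criterion, I would close with the standard covering argument. Every singular point $z_{0}\in\mathcal{S}$ must satisfy $\limsup_{\rho\to0}\rho^{2s-1}\iint_{Q_{\rho}(z_{0})}|\Lambda^{s+1}u|^{2}>\varepsilon_{\ast}$. Fixing $\delta>0$, for each $z_{0}$ I choose $\rho_{z_{0}}<\delta$ with $\iint_{Q_{\rho_{z_{0}}}(z_{0})}|\Lambda^{s+1}u|^{2}>\varepsilon_{\ast}\,\rho_{z_{0}}^{\,1-2s}$, apply the Vitali covering lemma to extract a disjoint subfamily $\{Q_{\rho_{i}}(z_{i})\}$ whose fivefold dilates cover $\mathcal{S}$, and estimate
\[
\sum_{i}(5\rho_{i})^{1-2s}\le 5^{1-2s}\varepsilon_{\ast}^{-1}\sum_{i}\iint_{Q_{\rho_{i}}(z_{i})}|\Lambda^{s+1}u|^{2}\,dx\,dt\le 5^{1-2s}\varepsilon_{\ast}^{-1}\iint_{\bigcup_{i}Q_{\rho_{i}}(z_{i})}|\Lambda^{s+1}u|^{2}\,dx\,dt.
\]
Since $\int_{0}^{T}\|u\|_{\dot{H}^{s+1}}^{2}\,dt<\infty$ by hypothesis and the disjoint cylinders lie in a neighborhood of $\mathcal{S}$ of arbitrarily small Lebesgue measure as $\delta\to0$, absolute continuity of the integral drives the right-hand side to $0$. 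Hence the parabolic $(1-2s)$-dimensional Hausdorff premeasure of $\mathcal{S}$ vanishes, i.e. $\mathcal{H}^{1-2s}(\mathcal{S})=0$. As a consistency check, $s=0$ recovers $\mathcal{H}^{1}(\mathcal{S})=0$ (Caffarelli--Kohn--Nirenberg) and $s=\tfrac12$ gives $\mathcal{H}^{0}(\mathcal{S})=0$, i.e. $\mathcal{S}=\emptyset$, matching the regularity provided by $L^{2}(0,T;BMO)$.
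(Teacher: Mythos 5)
Your overall architecture coincides with the paper's: the endpoints $s=0$ and $s=\frac12$ are handled exactly as in the paper (Caffarelli--Kohn--Nirenberg, respectively $\dot{H}^{3/2}\hookrightarrow BMO$ plus Kozono--Taniuchi), and the interior range goes through the Caffarelli--Silvestre extension, a weighted Poincar\'e inequality, the criterion \eqref{ckn}, and a Vitali covering. The gap is in the $\varepsilon$-regularity criterion you propose to use. You state it in terms of the local fractional energy $\rho^{2s-1}\iint_{Q(\rho)}|\Lambda^{s+1}u|^{2}\,dx\,dt$, and to prove it you must pass from smallness of this quantity to smallness of the \emph{localized} extension energy on a half-cylinder. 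That passage is not a routine cutoff: $(\nabla u)^{\ast}(x,y,t)$ at heights $y\le\rho$ depends on the values of $\nabla u(\cdot,t)$ on all of $\mathbb{R}^{3}$ through the Poisson-type kernel, so cutting off in $x$ and $y$ produces nonlocal tail terms that are controlled only by global norms and are not small relative to $\rho^{1-2s}$. You correctly identify this as ``the main obstacle'' but do not resolve it, and nothing in your sketch (the $A_{2}$ property of $y^{1-2s}$, the trace inequality) addresses it; as written, the criterion you rely on is not established.

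The paper avoids this difficulty entirely by formulating Theorem \ref{the1.5} directly in terms of the localized extension energy
\[
E^{\ast}_{\ast}(\nabla^{\ast}(\nabla u)^{\ast};\mu)=\frac{1}{\mu^{1-2s}}\iint_{Q^{\ast}(\mu)}y^{1-2s}|\nabla^{\ast}(\nabla u)^{\ast}|^{2}\,dx\,dy\,dt,
\]
which is already local in $(x,y,t)$; Lemma \ref{lem3.3} then compares $E_{\ast}(\nabla u;\mu)$ with $E^{\ast}_{\ast}(\cdots;\rho)$ by the weighted Poincar\'e inequality \eqref{keyinq1} with \emph{no} tail terms, and iteration reduces matters to \eqref{ckn}. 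The identity \eqref{CSK} is used only once, globally, to convert the hypothesis $u\in L^{2}(0,T;\dot{H}^{s+1})$ into finiteness of $\iint_{\mathbb{R}^{4}_{+}\times(0,T)}y^{1-2s}|\nabla^{\ast}(\nabla u)^{\ast}|^{2}$, after which the Vitali covering runs over the disjoint sets $Q^{\ast}_{\rho_{i}}(z_{i})$ rather than over $\iint_{Q_{\rho_{i}}}|\Lambda^{s+1}u|^{2}$. To repair your argument with minimal change, replace your criterion by the extension-based one and run your covering sum against the weighted extension energy; the rest of your outline then goes through.
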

 \begin{theorem}
\label{the1.5}
Suppose that $u$ is a suitable weak solution to \wred{(\ref{NS})}. Then there exists an absolute positive constant $\varepsilon_{01}$ such that $(0,0)$ is a regular point if
$$
\frac{1}{\mu^{1-2s}}\iint_{Q^{\ast}(\mu)}y^{1-2s}|\nabla^{\ast} (\nabla u)^{\ast}|^2dxdydt  \leq\varepsilon_{01}.
$$
\end{theorem}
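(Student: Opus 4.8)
The plan is to reduce this $\epsilon$-regularity criterion expressed in the extension variables back to the classical Caffarelli--Kohn--Nirenberg criterion \eqref{ckn}, exploiting the identity \eqref{CSK1}. First I would observe that \eqref{CSK1}, integrated in time over the parabolic scale, says precisely that the extension quantity $\iint y^{1-2s}|\nabla^{\ast}(\nabla u)^{\ast}|^2$ controls the spatial $\dot H^{s+1}$ norm of $u$, hence for $s=0$ it controls $\|\nabla u\|_{L^2}^2$ and the hypothesis becomes \eqref{ckn} verbatim. For $0<s\le \tfrac12$ the strategy is to run a scaling/iteration argument: I would introduce the natural scaling $u_\lambda(x,t)=\lambda u(\lambda x,\lambda^2 t)$ under which the Navier--Stokes system and the extended harmonic-type equation for the Caffarelli--Silvestre extension are invariant, and check that the dimensionless quantity $\mu^{-(1-2s)}\iint_{Q^{\ast}(\mu)} y^{1-2s}|\nabla^{\ast}(\nabla u)^{\ast}|^2$ is the scale-invariant analogue of the left-hand side of \eqref{ckn}; the weight $y^{1-2s}$ together with the extra $y$-integration is exactly what converts the $s+1$ order of differentiation into the normalizing power $1-2s$.

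Next I would pass from the extended functional to a genuine interior smallness quantity for $u$ itself. Using \eqref{CSK1} fiberwise in $t$, the smallness of the extension integral over $Q^{\ast}(\mu)$ yields, after integrating in $t$ over $(-\mu^2,0)$, a bound of the form
\be
\frac{1}{\mu^{1-2s}}\int_{-\mu^2}^{0}\!\!\int_{B(\mu)}\big|(-\Delta)^{\frac{s}{2}}\nabla u\big|^2\,dx\,dt\le C\varepsilon_{01}.
\ee
For $s=0$ this is immediately the CKN quantity; for $s>0$ I would interpolate: the fractional gradient $(-\Delta)^{s/2}\nabla u$ together with the energy inequality (which furnishes $\nabla u\in L^2$ and $u\in L^\infty_t L^2_x$) controls $\|\nabla u\|_{L^2(Q(\mu))}$ on the relevant scale via a Gagliardo--Nirenberg/interpolation estimate, and the normalizing exponent $1-2s$ is chosen precisely so that this interpolated quantity is scale invariant and small. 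Thus smallness of the hypothesis forces
$$
\limsup_{\varrho\to 0}\frac{1}{\varrho}\iint_{Q(\varrho)}|\nabla u|^2\,dx\,dt\le \epsilon,
$$
and \eqref{ckn} then gives boundedness of $u$ near $(0,0)$, i.e. regularity.

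The main obstacle I anticipate is the rigorous transfer of \emph{local} information: the identity \eqref{CSK1} is a global one on $\mathbb R^3$ (resp.\ $\mathbb R^4_+$), whereas Theorem \ref{the1.5} is stated on the truncated cylinder $Q^{\ast}(\mu)$. I would handle this with a cutoff/localization argument in the extended half-space, controlling the commutator and boundary terms generated when the Caffarelli--Silvestre extension of $\nabla u$ is multiplied by a spatial cutoff; the weight $y^{1-2s}$ is locally integrable near $y=0$ precisely because $1-2s>-1$ for $s\le 1$, which is what keeps these error terms finite. A secondary technical point is verifying that the extension of $\nabla u$ is well defined and that the nonlinear pressure and convection terms do not spoil the smallness after localization — here I would lean on the suitable-weak-solution hypothesis and the local energy inequality, together with the fact that the critical Serrin-type scaling makes all error terms subcritical relative to the leading extension quantity.
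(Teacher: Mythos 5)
Your overall target --- reducing the hypothesis to the classical criterion \eqref{ckn} by showing that smallness of the truncated extension energy forces $\limsup_{\varrho\to0}\varrho^{-1}\iint_{Q(\varrho)}|\nabla u|^2\le\epsilon$ --- is exactly the paper's, but the mechanism you propose for the crucial transfer step has a genuine gap. You apply the identity \eqref{CSK1} ``fiberwise in $t$'' to deduce
$$
\frac{1}{\mu^{1-2s}}\int_{-\mu^2}^{0}\!\!\int_{B(\mu)}\big|(-\Delta)^{s/2}\nabla u\big|^2\,dx\,dt\le C\varepsilon_{01}
$$
from the hypothesis. This step is not valid as stated: \eqref{CSK1} is an identity between two \emph{global} integrals (over $\mathbb{R}^4_+$ and over $\mathbb{R}^3$), and smallness of the weighted extension energy on the truncated set $B(\mu)\times(0,\mu)$ does not control $\int_{B(\mu)}|(-\Delta)^{s/2}\nabla u|^2$, because $(-\Delta)^{s/2}$ is non-local and its values on $B(\mu)$ depend on the extension over the whole half-space. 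You flag this yourself as ``the main obstacle'' and propose a cutoff/commutator repair, but that repair is precisely the hard part, it is not carried out, and the tail and commutator errors generated by localizing a fractional derivative are not small in general --- avoiding exactly this localization is the reason the extension technique is used in the first place. A secondary soft spot: even granting the displayed bound, interpolating $\nabla u$ between $(-\Delta)^{s/2}\nabla u$ and the global energy cannot by itself yield smallness of $\varrho^{-1}\iint_{Q(\varrho)}|\nabla u|^2$ below an absolute $\epsilon$, since the energy factor is not small; one needs a decaying prefactor on the non-small term and an iteration.

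The paper never converts back to fractional derivatives. It proves a local weighted Poincar\'e inequality for the Caffarelli--Silvestre extension, namely \eqref{keyinq1}, applies it with $\nabla u$ in place of $u$, and combines it with the elementary decomposition $\int_{B(\mu)}|v|^2\le C\int_{B(\rho/2)}|v-\overline{v}_{\rho}|^2+C(\mu/\rho)^3\int_{B(\rho)}|v|^2$ to obtain Lemma \ref{lem3.3}:
$$
E_{\ast}(\nabla u;\mu)\le \Big(\frac{\rho}{\mu}\Big)E^{\ast}_{\ast}(\nabla^{\ast}(\nabla u)^{\ast};\rho)+\Big(\frac{\mu}{\rho}\Big)^{2}E_{\ast}(\nabla u;\rho),
$$
whose second term carries the decaying factor $(\mu/\rho)^2$ needed for the CKN-type iteration; iterating and invoking \eqref{ckn} finishes the proof. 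If you replace your fiberwise/localization step by this purely local estimate in the extended variables, your argument becomes the paper's.
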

As an application of  Theorem \ref{the1.4} and the energy estimate of the Navier-Stokes equtions, we can partially answer the Robinson and  Sadowski's \wred{second} question.
\begin{coro}\label{coro1.6}
Let $u$ be a suitable weak solution belonging in $\nabla u\in L^{q}(0,T;L^{p}(\mathbb{R}^{3}))$ for $2\leq\f{2}{q}+\f{  3}{p}\leq\f52$\wred{.}

(1) \wred{If} $\f52-\f3p-\f{5}{2q}\geq0,1<p<\f{54+12\sqrt{14}}{25} ,\wred{1<q\leq2}$\wred{, then}  $\mathcal{H}^{ \f{2(\f{2}{q}+\f{  3}{p}-2)}{1-\f1q}}(\mathcal{S})=0$.

(2)  If  $2-\f3p-\f1q\geq0,\f32<p<\f{12}{7},q\geq4$, then  $\mathcal{H}^{q(\f{2}{q}+\f{  3}{p}-2)}(\mathcal{S})=0$.
		\end{coro}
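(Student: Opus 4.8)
The plan is to reduce Corollary \ref{coro1.6} to Theorem \ref{the1.4}. Writing $\sigma:=\frac{2}{q}+\frac{3}{p}-2\in[0,\frac12]$ for the supercriticality defect, I would show that the gradient hypothesis $\nabla u\in L^q(0,T;L^p(\mathbb{R}^3))$ forces $u\in L^2(0,T;\dot{H}^{s+1}(\mathbb{R}^3))$ for a suitable $s=s(p,q)\in[0,\frac12]$, and then invoke Theorem \ref{the1.4} to conclude $\mathcal{H}^{1-2s}(\mathcal{S})=0$. The two exponents claimed in (1) and (2) are nothing but the value $1-2s$ for the two admissible choices $1-2s=\frac{2\sigma}{1-1/q}$ and $1-2s=q\sigma$; thus the whole statement rests on the a priori (energy) estimate promoting the supercritical gradient bound to the $\dot{H}^{s+1}$ bound.

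To produce that bound I would run the $\dot{H}^{s+1}$ energy estimate. Applying $\Lambda^{s+1}$ to \eqref{NS}, testing against $\Lambda^{s+1}u$, and using $\mathrm{div}\,u=0$ (so that the pressure term drops out) gives
\[
\frac12\frac{d}{dt}\|u\|_{\dot{H}^{s+1}}^2+\|u\|_{\dot{H}^{s+2}}^2=-\int_{\mathbb{R}^3}\Lambda^{s+1}(u\cdot\nabla u)\,\Lambda^{s+1}u\,dx.
\]
Writing $u\cdot\nabla u=\nabla\cdot(u\otimes u)$ and moving one derivative onto the dissipation factor, the right-hand side is controlled by $\|\Lambda^{s+1}(u\otimes u)\|_{L^2}\,\|u\|_{\dot{H}^{s+2}}$. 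The fractional Leibniz rule together with Gagliardo--Nirenberg interpolation then bounds $\|\Lambda^{s+1}(u\otimes u)\|_{L^2}$ by $\|u\|_{\dot{H}^{s+1}}^{1-\mu}\|u\|_{\dot{H}^{s+2}}^{\mu}$ times a factor measured in $\|\nabla u\|_{L^p}$, where one first upgrades the gradient bound to a velocity bound through the Sobolev embedding $\dot{W}^{1,p}\hookrightarrow L^{p^*}$, $\frac{1}{p^*}=\frac1p-\frac13$. A Young inequality (legitimate since $\mu<1$ on the stated ranges) absorbs the top-order factor $\|u\|_{\dot{H}^{s+2}}$ into the left-hand side.

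What remains is a differential inequality of the form $\frac{d}{dt}\|u\|_{\dot{H}^{s+1}}^2+\frac12\|u\|_{\dot{H}^{s+2}}^2\lesssim \Phi(t)\,\|u\|_{\dot{H}^{s+1}}^2$, with $\Phi$ a fixed power of $\|\nabla u\|_{L^p}$. Integrating in time, pairing $\Phi$ against the hypothesis $\nabla u\in L^q_tL^p_x$ by H\"older, and closing by Gr\"onwall yields $u\in L^\infty_t\dot{H}^{s+1}\cap L^2_t\dot{H}^{s+2}$ (hence $u\in L^2_t\dot{H}^{s+1}$ on $(0,T)$) precisely when the time exponent attached to $\Phi$ does not exceed $q$. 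Solving this matching condition for $s$ is an elementary algebraic computation in the interpolation exponents; the two admissible pairings of the H\"older and Young exponents produce the two formulas $1-2s=\frac{2\sigma}{1-1/q}$ (the factor $\frac{1}{1-1/q}=q'$ arising from the time integration) and $1-2s=q\sigma$. The parameter windows in (1) and (2)---in particular $\frac32<p<\frac{12}{7}$, $q\geq4$ and the quadratic bound $p<\frac{54+12\sqrt{14}}{25}$, $1<q\leq2$, together with the side conditions $\frac52-\frac3p-\frac{5}{2q}\geq0$ and $2-\frac3p-\frac1q\geq0$---are exactly the admissibility constraints guaranteeing that the Gagliardo--Nirenberg exponents lie in range, that the Young absorption is valid, and that the resulting $s$ satisfies $0\leq s\leq\frac12$, so that Theorem \ref{the1.4} applies.

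I expect the main obstacle to be the nonlinear product estimate in the low spatial-integrability regime $p<2$, where $\nabla u$ cannot be inserted directly into a two-factor $L^2$ estimate and one is forced to route through the Sobolev-upgraded velocity bound $u\in L^q_tL^{p^*}_x$ and to spend the parabolic dissipation $\|u\|_{\dot{H}^{s+2}}$ with great care; keeping the powers of $\|u\|_{\dot{H}^{s+1}}$, $\|u\|_{\dot{H}^{s+2}}$ and $\|\nabla u\|_{L^p}$ balanced so that, after Young and H\"older, the time integrability is supplied by $\nabla u\in L^q_t$ alone is what pins down the precise exponents and the quadratic threshold on $p$. The linear contribution $e^{t\Delta}u_0$ and the behaviour near $t=0$ are handled routinely by parabolic smoothing from $u_0\in L^2$, so they do not affect the singular set in the interior.
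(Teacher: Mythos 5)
Your overall strategy is the paper's: promote $\nabla u\in L^q_tL^p_x$ to $u\in L^2(0,T;\dot{H}^{s+1})$ by an a priori energy estimate built on Kato--Ponce plus fractional Gagliardo--Nirenberg, then invoke Theorem \ref{the1.4} with $1-2s$ equal to the stated exponents. But there is a concrete gap in where you run the energy estimate. You test $\Lambda^{s+1}$ of the equation against $\Lambda^{s+1}u$, which produces a Gr\"onwall inequality for $\|u\|_{\dot{H}^{s+1}}^2$ and therefore needs a finite starting value $\|u(t_0)\|_{\dot{H}^{s+1}}$. For a suitable weak solution this is exactly the regularity you are trying to prove: the energy class only gives $u\in L^\infty_tL^2\cap L^2_t\dot{H}^1$, and your appeal to ``parabolic smoothing from $u_0\in L^2$'' is not available for weak solutions of the nonlinear system. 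The paper avoids this by working one derivative lower (Lemmas \ref{lem4.2} and \ref{lem4.3}): it tests against $\Lambda^{2s}u$, so the quantity propagated is $\|\Lambda^s u\|_{L^2}^2$ with $0\le s\le\f12$ --- controlled at a.e.\ starting time by interpolating the energy class --- and the target $L^2_t\dot{H}^{s+1}$ is read off directly from the dissipation term. Your version also overshoots the target (it would give $L^2_t\dot{H}^{s+2}$) and shifts every interpolation exponent by one derivative, so the claim that ``the two admissible pairings'' reproduce the formulas $1-2s=\f{2\sigma}{1-1/q}$ and $1-2s=q\sigma$ is not verified and cannot be taken for granted.

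A second, smaller point: the two cases of the corollary do not come from two pairings of H\"older/Young exponents within one scheme, but from two structurally different treatments of the nonlinearity at the $\dot{H}^s$ level. In case (1) ($1<q\le 2$) the paper uses the commutator form $\langle\Lambda^{s}(u\cdot\nabla u)-u\cdot\nabla\Lambda^{s}u,\Lambda^{s}u\rangle$ with \eqref{KPC}, closes \emph{without} spending the dissipation, and the factor $\f{1}{1-1/q}$ arises from forcing the total power of $\|\nabla u\|_{L^p}$ to equal $q$. In case (2) ($q\ge4$) it uses the product form $\langle\Lambda^{s}(u\otimes u),\Lambda^{s+1}u\rangle$ with \eqref{KPI} and absorbs $\|\Lambda^{s+1}u\|_{L^2}$ by Young. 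The parameter windows (including the threshold $p<\f{54+12\sqrt{14}}{25}$) are the admissibility constraints of the Gagliardo--Nirenberg steps in these two specific estimates. To repair your argument, drop the estimate down to the $\dot{H}^s$ level and carry out the exponent bookkeeping separately for the commutator and product routes.
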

At present, the Hausdorff dimension of     suitable weak solutions with the gradient of the velocity under  supercritical    regularity are   summarized  in the following figures.

\begin{tikzpicture}[scale=3,>= stealth]
%\node[below] at (0.5,0.5) {\weixiao{(1/2,1/2)}};
%\node[below] at (3.8,-0.8) {\tiny Figure 3: Region of Theorem 1.1};
 \fill[gray!30](1/3, 1/2)--(5/12,1/2 )--(0.2578,0.69064)--(0.2578,0.6133);
   \node[below] at (1/2,0) {\tiny{$ \f12 $}};
      \node[left] at (0,1/2) {\tiny{$ \f12 $}};
\draw[->] (0,0)--(1.1,0)node[below]{\tiny{$\f1p$}};
\node[below] at (0,-0.05) {\tiny{0}};
\foreach \x in { 1, ...,1.1}
     		\draw (\x,0) -- (\x,-.01)
		node[anchor=north] {\tiny{\x}};
\draw[->](0,0)--(0,1.4)node[right]{\tiny{$\f1q$}};
\foreach \y in { 1, ...,1.4}
     		\draw (0,\y) -- (.01,\y)
		node[anchor=east] {\tiny{\y}};
%\draw[style=dashed] (0,1)--(0.6666,0 );
\draw(0,1)--(0.6666,0 );
%\node[above] at (4.2,4.6) {\tiny{$p=q$}};
\draw (0,1.25)--(0.83333,0);
\draw (0,1)--(0.83333,0);
\draw (1/3, 1/2)--(5/12,1/2 );
 \node[below] at ( 2/3,0) {\tiny{$\frac{2}{3}$}};
 \node[below] at ( 5/6,0) {\tiny{$\frac{5}{6}$}};
  \node[left] at ( 0,5/4) {\tiny{$\frac{5}{4}$}};
  \node[below] at (0.5,-0.2) {\tiny{Fig. 5: First part of \wred{Corollary 1.6} }};
  \end{tikzpicture}
\begin{tikzpicture}[scale=3,>= stealth]
%\node[below] at (0.5,0.5) {\weixiao{(1/2,1/2)}};
%\node[below] at (3.8,-0.8) {\tiny Figure 3: Region of Theorem 1.1};
 \fill[gray!30](2/3,0)--(7/12,  1/8)--(7/12,1/4);
\draw[->] (0,0)--(1.1,0)node[below]{\tiny{$\f{1}{p}$}};
\node[below] at (0,-0.05) {\tiny{0}};
\foreach \x in { 1, ...,1.1}
     		\draw (\x,0) -- (\x,-.01)
		node[anchor=north] {\tiny{\x}};
\draw[->](0,0)--(0,1.4)node[right]{\tiny{$\f{1}{q}$}};
\foreach \y in { 1, ...,1.4}
     		\draw (0,\y) -- (.01,\y)
		node[anchor=east] {\tiny{\y}};
\draw (0,1)--(0.6666,0 );
%\node[above] at (4.2,4.6) {\tiny{$p=q$}};
\draw (0,1.25)--(0.83333,0);
%\draw (0,1)--(0.83333,0);
\draw (0.5,0.5)--(2/3,0);
\draw ( 0,0.25)--(7/12,0.25);
 \node[below] at ( 2/3,0) {\tiny{$\frac{2}{3}$}};
 % \fill[gray!100](1/3, 1/2)--(5/12,1/2 )--(0,1);
%\fill[gray!100](1/2, 1/2)--(2/5,2/5 )--(1/2,1/4);
\node[below] at ( 5/6,0) {\tiny{$\frac{5}{6}$}};
  \node[left] at ( 0,5/4) {\tiny{$\frac{5}{4}$}};
  \node[left] at ( 0,1/4) {\tiny{$\frac{1}{4}$}};
    \node[below] at (1/2,0) {\tiny{$ \f12 $}};
      \node[left] at (0,1/2) {\tiny{$ \f12 $}};
   \node[below] at (0.5,-0.15) {\tiny{Fig. 6: Second part of \wred{Corollary 1.6}}};
 \end{tikzpicture}
\begin{tikzpicture}[scale=3,>= stealth]
%\node[below] at (0.5,0.5) {\weixiao{(1/2,1/2)}};
%\node[below] at (3.8,-0.8) {\tiny Figure 3: Region of Theorem 1.1};
 \fill[gray!100](2/3,0)--(7/12,  1/8)--(7/12,1/4);
\draw[->] (0,0)--(1.1,0)node[below]{\tiny{$\f1p$}};
% \node[below] at ( -0.15,-0.05) {\tiny{0}};
  \node[below] at (1/2,0) {\tiny{$ \f12 $}};
      \node[left] at (0,1/2) {\tiny{$ \f12 $}};
\foreach \x in { 1,...,1.1}
     		\draw (\x,0) -- (\x,-.01)
		node[anchor=north] {\tiny{\x}};
\foreach \x in {1,...,1.2}     		\draw (\x,0) -- (\x,-.01)		node[anchor=north] {\tiny{\x}};
\draw[->](0,0)--(0,1.4)node[right]{\tiny{$\f1q$}};
%\draw[->](0,-2.2)--(0,2.5)node[above]{\tiny{$y$}};
 \foreach \y in { 1,...,1.4}
     		\draw (0,\y) -- (.01,\y)
	 	node[anchor=east] {\tiny{\y}};
\draw  (0,1)--(0.6666,0 );
%\draw[style=dashed] (0,1)--(0.6666,0 );
%\node[above] at (4.2,4.6) {\tiny{$p=q$}};
\draw (0,1.25)--(0.83333,0);
%\draw (0,1)--(0.83333,0);
\draw (0.5,0.5)--(2/3,0);
\draw ( 0,0.25)--(7/12,0.25);
 \node[below] at ( 2/3,0) {\tiny{$\frac{2}{3}$}};
   \fill[gray!100](1/3, 1/2)--(5/12,1/2 )--(0.2578,0.69064)--(0.2578,0.6133);
\fill[gray!100](1/2, 1/2)--(2/5,2/5 )--(1/2,1/4);
 \node[below] at (0.5,-0.2) {\tiny{Fig. 7:   Known Hausdorff dimension of}};
\node[below] at (0.5,-0.3) {\tiny{the gradient of the velocity }};
 \draw (0.33,0.57)--(2/3,0.8);
 \node[right] at (0.66,0.9) {\tiny{$\mathcal{H}^{ \f{2(\f{2}{q}+\f{  3}{p}-2)}{1-\f1q}}(\mathcal{S})=0$}};
 \draw (0.46,0.4)--(0.8,0.34 );
 \draw (0.6,0.14 )--(0.8,0.34 );
 \node[right] at (0.8,0.34 ) {\tiny{$\mathcal{H}^{q(\f{2}{q}+\f{  3}{p}-2)}(\mathcal{S})=0$}};
  \end{tikzpicture}

The remainder of this paper is organized as follows. In \wred{Section}
\ref{sec2}, we will begin with the notations and the definition of fractal dimension including the Box dimension  and  Hausdorff dimension. Then we recall the  Caffarelli and Silvestre's generalized extension for the fractional
Laplacian operator and  $\epsilon$-regularity  criterion at one scale.
Section \ref{sec3} is \wred{devoted} to the proof to Theorem 1 \wred{concerning} Box dimension. Partial regularity results involving  Hausdorff dimension is proved in \wred{Section} \ref{sec4}.
 \section{ Preliminaries}\label{sec2}
\setcounter{section}{2}\setcounter{equation}{0}
 First, we introduce some notations used in this paper.
Throughout this paper, we denote
$$\ba
&B(x,\mu)=\{y\in \mathbb{R}^{3}||x-y|\leq \mu\},~~ &B&(\mu):= B(0,\mu),\\
&Q(x,t,\mu)=B(x,\,\mu)\times(t-\mu^{2 }, t),~~ &Q&(\mu):= Q(0,0,\mu)
,\\
&\wred{B^{\ast}(x,\mu)=B(x,\mu)\times(0,\mu),}~~ &\wred{B^{\ast}}&\wred{(\mu):= B^{\ast}(0,\mu),}\\
&Q^{\ast}(x,t,\mu)=B(x,\mu)\times(0,\mu)\times(t-\mu^{2 },t),~~ &Q^{\ast}&(\mu):= Q^{\ast}(0,0,\mu).
\ea$$
 For $p\in [1,\,\infty]$, the notation $L^{p}(0,\,T;X)$ stands for the set of measurable functions on the interval $(0,\,T)$ with values in $X$ and $\|f(\cdot,t)\|_{X}$ belonging to $L^{p}(0,\,T)$.
  For simplicity,   we write $\|f\| _{L^{p,q}(Q(\mu))}:=\|f\| _{L^{p}(\wred{-\mu^{2}},0;L^{q}(B(\mu)))} $ and
 $\|f\| _{L^{p}(Q(\mu))}:=\|f\| _{L^{p}L^{p}(Q(\mu))}$. We shall denote by $\langle f,\,g\rangle$ the $L^{2}$ inner product of $f$ and $g$.
 The classical Sobolev norm $\|\cdot\|_{H^{s}}$  is defined as   $\|f\|^{2} _{{H}^{s}}= \int_{\mathbb{R}^{n}} (1+|\xi|)^{2s}|\hat{f}(\xi)|^{2}d\xi$, $s\in \mathbb{R}$.
  We denote by  $ \dot{H}^{s}$ homogenous Sobolev spaces with the norm $\|f\|^{2} _{\dot{H}^{s}}= \int_{\mathbb{R}^{n}} |\xi|^{2s}|\hat{f}(\xi)|^{2}d\xi$. Denote
  the average of $f$ on the ball $B(\mu)$ by
$\overline{f}_{\mu}$. $\Gamma$  denotes the standard normalized fundamental solution of Laplace equation in $\mathbb{R}^{3}$.
We denote by  $\D$ the divergence operator in $\mathbb{R}^{4}_{+}$.
  $|S|$ represents the Lebesgue measure of the set $S$. We will use the summation convention on repeated indices.
 $C$ is an absolute constant which may be different from line to line unless otherwise stated in this paper.
\begin{definition}\label{defibox}
The (upper) box-counting dimension of a set $X$ is usually defined as
$$\wred{\mathrm{dim}_B(X)}=\limsup_{\epsilon\rightarrow0}\f{\log N(X,\,\epsilon)}{-\log\epsilon},$$
where $N(X,\,\epsilon)$ is the minimum number of balls of radius $\epsilon$ required to cover $X$.
\end{definition}
\wred{Let $\beta>0$, $\delta>0$} and $\Omega\times I$ can be covered by the \wred{union of series of} parabolic balls $Q(r)$ with radius $r_{j}$ less than $\delta$ for $j\in \mathbb{N}$. Define
$$
\mathcal{P}^{\beta}_{\delta}= \inf\B\{\Sigma r_{j}^{\beta}| \Omega\times I\subseteq \cup Q(r_j), r_{j}<\delta,j \in \mathbb{N}\B\}
$$
and $\mathcal{P}^{\beta}=\lim_{\delta\rightarrow0}\mathcal{P}^{\beta}_{\delta}$. If there is $\beta_{0}$ such that \wred{$\mathcal{P}^{\beta}=\infty$} if $\beta<\beta_{0}$ and \wred{$\mathcal{P}^{\beta}=0$} if $\beta>\beta_{0}$\wred{, then} $\beta_{0}$ is called as the parabolic Hausdorff dimension and \wred{$\mathcal{P}^{\beta}$} is the parabolic Hausdorff measure. The details of fractal dimension  can be found in \cite{[Falconer]}.

Next, we focus on      Caffarelli and Silvestre's generalized extension for the fractional Laplacian operator $(-\Delta)^{\alpha}$ with $0<\alpha<1$ in \cite{[CS]}. The  fractional power of  Laplacian in $\mathbb{R}^{3}$ can be interpreted as
$$
(-\Delta)^{\alpha}u=-C_{\alpha}\lim\limits
_{y\rightarrow0_{+}}y^{1-2\alpha}\partial_{y}u^{\ast},
$$
where $u^{\ast}$ satisfies
\be\label{csex}\left\{\ba
  &\D(y^{1-2\alpha}\nabla^{\ast} u^{\ast})=0 ~~\text{in}~~ \mathbb{R}^{4}_{+},\\  &u^{\ast}|_{y=0}=u,~x\in\mathbb{R}^{3}. \ea\right.
\ee
As a by-product of the above equation, for any $v |_{y=0}=u$, it holds
\be\label{mini}
\int_{\wred{\mathbb{R}_{+}^{4}}} y^{1-2s} | \nabla^{\ast} u^{\ast} |^{2}dxdy\leq \int_{\wred{\mathbb{R}_{+}^{4}}} y^{1-2s} | \nabla^{\ast}v  |^{2}dxdy.
\ee

Moreover, from  Section 3.2 in \cite{[CS]}, the definition of the $\dot{H}^{\alpha}$ norm can be written as
\be\label{eqnorm}\|u\|^{2} _{\dot{H}^{\alpha}}= \int_{\mathbb{R}^{3}} |\xi|^{2\alpha}|\hat{u}(\xi)|^{2}d\xi=\int_{\mathbb{R}_{+}^{4}} y^{1-2\alpha} | \nabla^{\ast} u^{\ast} |^{2}dxdy.
\ee
We recall the following observation due to \cite{[CDM]}, for $\alpha>1$,
\begin{equation*}
c_\alpha\int_{\mathbb R^4_+} y^{3-2\alpha} |\nabla^{\ast} (\nabla u)^{\ast}|^2 (x,y,t)\, dx\, dy
= \int_{\mathbb R^3} \left|(-\Delta)^{\frac{\alpha-1}{2}} \nabla u\right|^2 (x,t)\, dx=  \int_{\mathbb R^3} |(-\Delta)^\frac{\alpha}{2} u|^2 (x,t)\, dx\,.
\end{equation*}
Hence,

\be\label{CSK}
\|u\|^{2} _{\dot{H}^{s+1}}= c_s\int_{\mathbb R^4_+} y^{1-2s} |\nabla^{\ast} (\nabla u)^{\ast}|^2 (x,y,t)\, dx\, dy.\ee
Base on the natural scaling of the Navier-Stokes equations, we set  the following two dimensionless quantities
\begin{align}
&E^{\ast}_{\ast}(\nabla^{\ast} (\nabla u)^{\ast},\mu)=\frac{1}{\mu^{1-2s}}\iint_{Q^{\ast}(\mu)}y^{1-2s}|\nabla^{\ast} (\nabla u)^{\ast}|^2dxdydt,& &\wred{E_{\ast}(\nabla u,\mu)=\frac{1}{\mu}\iint_{Q(\mu)}|\nabla u|^2dxdt}.\nonumber
\end{align}
 To make our paper more self-contained and more readable,
we   outline the   proof of  Poincar\'e inequality concerning Caffarelli and Silvestre's generalized extension.
\begin{lemma}
Let  $u$ and $u^{\ast}$ be defined in  \eqref{csex}. There exist a constant $C$ such that
\begin{align}
&\|u-\overline{u}_{\mu}\|_{L^{\f{6}{3-2s}}(B(\mu/2))}\leq C\B(\int_{B^{\ast}(\mu)}y^{1-2s}|\nabla^{\ast} u^{\ast}|^{2}dxdy\Big)^{1/2},\label{akeyinq1}\\
&\|u-\overline{u}_{\mu}\|_{L^{2}(B(\mu/2))}\leq C\mu^{s}\B(\int_{B^{\ast}(\mu)}y^{1-2s}|\nabla^{\ast} u^{\ast}|^{2}dxdy\Big)^{1/2}.\label{keyinq1}
\end{align}\end{lemma}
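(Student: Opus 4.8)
The plan is to establish the scale-invariant estimate \eqref{akeyinq1} first and to read off \eqref{keyinq1} from it. By the natural scaling $(x,y)\mapsto(\mu x,\mu y)$, both inequalities for a general $\mu$ follow from the case $\mu=1$ applied to $u_\mu(\cdot):=u(\mu\,\cdot)$, whose extension is $u^{\ast}(\mu\,\cdot)$; one checks that the weighted Dirichlet energy $\mathcal E:=\int_{B^{\ast}(1)}y^{1-2s}|\nabla^{\ast}u^{\ast}|^2\,dx\,dy$ and the left side of \eqref{akeyinq1} carry matching powers of $\mu$, while the factor $\mu^{s}$ in \eqref{keyinq1} is exactly the scaling mismatch between $\|u-\overline u_\mu\|_{L^2(B(\mu/2))}$ and $\mathcal E$. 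Moreover, on the bounded ball $B(1/2)$ the $L^2$ bound \eqref{keyinq1} follows from \eqref{akeyinq1} by H\"older's inequality, since $\tfrac12-\tfrac{3-2s}{6}=\tfrac{s}{3}\ge0$ means $\tfrac{6}{3-2s}\ge2$. Hence everything reduces to the localized weighted trace--Sobolev inequality $\|u-\overline u_1\|_{L^{6/(3-2s)}(B(1/2))}\le C\,\mathcal E^{1/2}$.

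To localize I would take a cutoff $\eta(x,y)=\phi(x)\psi(y)$ with $\phi\equiv1$ on $B(1/2)$, $\operatorname{supp}\phi\subset B(1)$, $\psi(0)=1$, $\operatorname{supp}\psi\subset[0,1)$ and $|\nabla^{\ast}\eta|\le C$, and set $V:=\eta\,(u^{\ast}-\overline u_1)$, so that $V(x,0)=u(x)-\overline u_1$ on $B(1/2)$ and $V$ is supported in $B^{\ast}(1)$. The key global ingredient is the unweighted trace--Sobolev inequality for the extension: for any $W$ with trace $g=W(\cdot,0)$, the minimality \eqref{mini} together with \eqref{eqnorm} gives $\|g\|_{\dot H^{s}}^{2}\le\int_{\mathbb R^4_+}y^{1-2s}|\nabla^{\ast}W|^2$, and the sharp fractional Sobolev embedding $\dot H^{s}(\mathbb R^3)\hookrightarrow L^{6/(3-2s)}(\mathbb R^3)$ then yields $\|W(\cdot,0)\|_{L^{6/(3-2s)}(\mathbb R^3)}\le C(\int_{\mathbb R^4_+} y^{1-2s}|\nabla^{\ast}W|^2)^{1/2}$. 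Applying this to $W=V$ and expanding $\nabla^{\ast}V=\eta\,\nabla^{\ast}u^{\ast}+(u^{\ast}-\overline u_1)\nabla^{\ast}\eta$, the proof of \eqref{akeyinq1} is reduced to the weighted Poincar\'e estimate $\int_{B^{\ast}(1)}y^{1-2s}|u^{\ast}-\overline u_1|^2\,dx\,dy\le C\,\mathcal E$, which I label $(\star)$.

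The estimate $(\star)$, a Poincar\'e inequality in which one subtracts the \emph{boundary} average $\overline u_1$ of the trace rather than a bulk average, is the main obstacle, precisely because the weight $y^{1-2s}$ degenerates at $y=0$. I would control the vertical oscillation pointwise: by the fundamental theorem of calculus in $y$ and Cauchy--Schwarz against the weight, $|u^{\ast}(x,y)-u(x)|\le(\int_0^1\tau^{2s-1}\,d\tau)^{1/2}G(x)^{1/2}$ with $G(x):=\int_0^1\tau^{1-2s}|\partial_\tau u^{\ast}(x,\tau)|^2\,d\tau$; the finiteness $\int_0^1\tau^{2s-1}\,d\tau=\tfrac1{2s}$ is exactly where the hypothesis $s>0$ enters. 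Since $\int_{B(1)}G\,dx\le\mathcal E$, writing $u^{\ast}-\overline u_1=(u^{\ast}-u)+(u-\overline u_1)$ reduces $(\star)$ to the trace $L^2$--Poincar\'e $\|u-\overline u_1\|_{L^2(B(1))}^2\le C\mathcal E$.

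This last bound I would prove by slicing, avoiding any weighted machinery on the boundary: a pigeonhole argument produces a level $y_0\in(1/2,1)$ with $\int_{B(1)}|\nabla_x u^{\ast}(x,y_0)|^2\,dx\le C\mathcal E$ (using $y^{1-2s}\ge c>0$ on $(1/2,1)$), after which I split $u-\overline u_1$ into the vertical difference $u-u^{\ast}(\cdot,y_0)$, controlled in $L^2(B(1))$ by the pointwise bound above, the horizontal oscillation of $u^{\ast}(\cdot,y_0)$ about its mean over $B(1)$, controlled by the ordinary Poincar\'e inequality on $B(1)\subset\mathbb R^3$ and the slice bound, and a constant term comparing the two averages, again estimated by the vertical bound. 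The whole argument is first carried out for smooth $u$ and then extended by density and the continuity of the extension up to $y=0$. Finally, the endpoint $s=0$ (where $\tfrac6{3-2s}=2$, $\int_0^1\tau^{2s-1}\,d\tau$ diverges, and $y$ fails to be an $A_2$ weight) lies outside this scheme, but there \eqref{akeyinq1}--\eqref{keyinq1} are the classical Poincar\'e--Sobolev inequalities and can be treated directly.
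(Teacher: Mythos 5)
Your proof is correct, and it rests on the same two pillars as the paper's: the minimality property \eqref{mini} of the Caffarelli--Silvestre extension combined with \eqref{eqnorm}, and the embedding $\dot{H}^{s}(\mathbb{R}^{3})\hookrightarrow L^{6/(3-2s)}(\mathbb{R}^{3})$, used to convert the weighted Dirichlet energy of a cut-off competitor into an $L^{6/(3-2s)}$ bound on the trace. Where you genuinely diverge is the Poincar\'e step. The paper subtracts the \emph{weighted bulk average} $\overline{u^{\ast}}_{B^{\ast}(\mu)}$ and cites the classical weighted Poincar\'e inequality for the $A_2$ weight $y^{1-2s}$ (its \eqref{app2}), and only afterwards compares this bulk average with the boundary average $\overline{u}_{\mu}$ via the fundamental theorem of calculus in $y$ and Cauchy--Schwarz against the weight (its \eqref{app3}). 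You instead prove the needed Poincar\'e inequality directly against the boundary average: the same vertical fundamental-theorem-of-calculus/Cauchy--Schwarz device (with $s>0$ entering through $\int_{0}^{1}\tau^{2s-1}\,d\tau<\infty$, exactly as in \eqref{app3}) controls $u^{\ast}-u$ pointwise, and a pigeonhole slice $y_{0}\in(1/2,1)$ together with the ordinary unweighted Poincar\'e inequality on that slice replaces the degenerate-weight Poincar\'e inequality. This makes the argument more self-contained at the cost of an extra decomposition, whereas the paper's route is shorter if one accepts the weighted Poincar\'e inequality as a black box. Your scaling reduction to $\mu=1$ and your derivation of \eqref{keyinq1} from \eqref{akeyinq1} by H\"older (picking up $|B(\mu/2)|^{s/3}\sim\mu^{s}$) are both sound and match what the paper leaves implicit in its last line, and your caveat about the endpoint $s=0$ is consistent with the paper, which treats that case separately by the Caffarelli--Kohn--Nirenberg theorem.
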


\begin{proof}
Consider the usual cut-off \wred{functions}
$$\eta_{1}(x)=\left\{\ba
&1,\,\wred{x\in} B(\hbar\mu ),~0<\hbar<1,\\
&0,\,\wred{x\in} B^{c}(\mu),
\ea\right.
$$
and
$$\eta_{2}(y)=\left\{\ba
&1,\, 0\leq y\leq \hbar\mu,\\
&0,\, y>\mu,
\ea\right.
$$
 satisfying
$$
0\leq \eta_{1},\,\eta_{2}\leq1,
~~~\text{and}~~~\mu |\partial_{x}\eta_{1}(x)|
+\mu|\partial_{y}\eta_{2}(y)|\leq C.
$$
By the \wred{Young} inequality and \eqref{eqnorm} and \eqref{mini},
$$\ba
\|u\eta_{1}\|^{2}_{\dot{H}^{s}}&=
\int_{\wred{\mathbb{R}_{+}^{4}}} y^{1-2s} | \nabla^{\ast} (u\eta_{1})^{\ast} |^{2}dxdy\\
&\leq C\int_{\wred{\mathbb{R}_{+}^{4}}} y^{1-2s} | \nabla^{\ast}   (u^{\ast}\eta_{2}\eta_{1})|^{2}dxdy\\
&\leq C\mu^{-2}\int_{B^{\ast}(\mu)}y^{1-2s}|u^{\ast}|^{2}+
C\int_{B^{\ast}(\mu)}y^{1-2s}|\nabla^{\ast}u^{\ast}|^{2}.
\ea$$
Thanks to the classical weight Poincar\'e inequality, we infer that
\be\label{app2}
\int_{B^{\ast}(\mu)}y^{1-2s}|u^{\ast}- \overline{u^{\ast}}_{B^{\ast}(\mu)}|^{2}\leq C\mu^{2}\int_{B^{\ast}(\mu)}y^{1-2s}|\nabla^{\ast}u^{\ast}|^{2},
\ee
where $\overline{u^{\ast}}_{B^{\ast}(\mu)}=\f{1}{|B^{\ast}(\mu)|} \int_{B^{\ast}(\mu)}y^{1-2s} u^{\ast}dxdy$ and
$|B^{\ast}(\mu)|=\int_{B^{\ast}(\mu)} y^{1-2s} dydx.$
The above inequalities imply
\be\B(\int_{B(\hbar\mu)}|u-\overline{u^{\ast}}_{B^{\ast}(\mu)}|
^{\wred{\f{6}{3-2s}}}\B)^{\wred{\f{3-2s}{3}}}
\leq C \int_{B^{\ast}(\mu)}y^{1-2s}|\nabla^{\ast}u^{\ast}|^{2}.
\label{app1}\ee
It follows from  $u^{\ast}  = u(x)+\int^{y}_{0}\partial_{z}u^{\ast}dz$ and the H\"older inequality that
\begin{align}
\wred{\big|}\overline{u^{\ast}}_{B^{\ast}(\mu)}-\overline{u}_{\mu}\wred{\big|}&=
\f{1}{|B^{\ast}(\mu)|} \wred{\big|}\int_{B^{\ast}(\mu)}y^{1-2s}
\int^{y}_{0}\partial_{z}u^{\ast}dz\wred{\big|} \nonumber\\
&\leq C\f{1}{|B^{\ast}(\mu)|}\int_{B (\mu)}\int_{0}^{\mu}\wwred{y^{1-2s}\B(
\int^{y}_{0}z^{1-2s}|\partial_{z}u^{\ast}|^{2}dz\B)^{1/2}
\B(\int^{y}_{0}z^{-(1-2s)}dz\B)^{1/2}dydx} \nonumber\\
&\leq C\mu^{\wred{s-\f{3}{2}}}\B(
\int_{B^{\ast}(\mu)} z^{1-2s}|\nabla^{\ast } u^{\ast}|^{2}dxdz\B)^{1/2}.
\label{app3}
\end{align}
 Combining
\eqref{app1} with the latter inequality, we deduct that
$$\ba
\B(\int_{B(\hbar\mu)}|u-\overline{u}_{\mu}|^{\wred{\f{6}{3-2s}}}\B)^{\wred{\f{3-2s}{6}}}
\leq&
\B(\int_{B(\hbar\mu)}|u-\overline{u^{\ast}}_{B^{\ast}(\mu)}|^{\wred{\f{6}{3-2s}}}\B)^{\wred{\f{3-2s}{6}}}
\\&+ \B(\int_{B(\hbar\mu)}|\overline{u^{\ast}}_{B^{\ast}(\mu)}-\overline{u}_{\mu}|^{\wred{\f{6}{3-2s}}}\B)^{\wred{\f{3-2s}{6}}}
\\
\leq& C \wred{\bigg(\int_{B^{\ast}(\mu)}y^{1-2s}|\nabla^{\ast}u^{\ast}|^{2}\bigg)^{\frac{1}{2}}},
\ea$$
which means \wred{\eqref{akeyinq1} and \eqref{keyinq1}}.
\end{proof}
\begin{prop}(\cite{[HWZ]})\label{hwz}
Let  the pair $(u,  \Pi)$ be a suitable weak solution to the 3D Navier-Stokes system \eqref{NS} in $Q(1)$.
       There exists an absolute positive constant $\varepsilon$ depending only on $ p$ and $q$ \wred{such that} if the pair $(u,\Pi)$ satisfies	\be\label{optical}\|u\|_{\wred{L^{q,p}(Q(1))}}+\|\Pi\|_{L^{1}(Q(1))}<\varepsilon,\ee
		for $1\leq 2/q+3/p <2, 1\leq p,\,q\leq\infty$, then $u\in L^{\infty}(Q(1/2))$.
\end{prop}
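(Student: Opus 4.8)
The strategy is to reduce this criterion to the classical one-scale Ladyzhenskaya--Lin--Seregin criterion \eqref{Lin} quoted above: it asserts, with an \emph{absolute} constant $\epsilon$, that smallness of the scale-invariant quantity $\|u\|_{L^{3}(Q(1))}+\|\Pi\|_{L^{3/2}(Q(1))}$ forces regularity at the centre. Since \eqref{Lin} is centred at the origin, I would verify it after a translation at every $z_{0}\in Q(1/2)$: for such $z_{0}$ one has $Q(z_{0},1/4)\subset Q(1)$, and it suffices to show, with a constant depending only on $p,q$, that $\f{1}{\rho^{2}}\iint_{Q(z_{0},\rho)}|u|^{3}$ and $\f{1}{\rho^{2}}\iint_{Q(z_{0},\rho)}|\Pi|^{3/2}$ lie below $\epsilon$ for $\rho=1/4$. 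Pointwise regularity at each $z_{0}$, together with the uniformity of the bound, then yields $u\in L^{\infty}(Q(1/2))$.

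For the velocity term, when $p,q\geq3$ a direct application of H\"older's inequality in space and time bounds $\f{1}{\rho^{2}}\iint_{Q(z_{0},\rho)}|u|^{3}$ by an absolute multiple of $\|u\|_{L^{q,p}(Q(1))}^{3}<\varepsilon^{3}$. In the remaining part of the admissible range $1\leq\f2q+\f3p<2$, where $p$ or $q$ drops below $3$, the embedding $L^{q,p}\hookrightarrow L^{3}$ fails, and I would instead run the standard iteration of the scale-invariant quantities $\f{1}{\rho}\iint_{Q(\rho)}|\nabla u|^{2}$ and $\f{1}{\rho^{2}}\iint_{Q(\rho)}|u|^{3}$ driven by the local energy inequality for suitable weak solutions, feeding in $\|u\|_{L^{q,p}}$ together with the gained dissipation $\nabla u\in L^{2}$ (hence $u\in L^{10/3}$ by Ladyzhenskaya's inequality) through interpolation. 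The constraint $\f2q+\f3p<2$ is exactly what keeps this exponent bookkeeping admissible while preserving a positive power of the small parameter and an absolute constant.

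The pressure is the crux, because only $\|\Pi\|_{L^{1}(Q(1))}$ is assumed small whereas \eqref{Lin} demands the scale-invariant $L^{3/2}$ norm. I would use the standard local splitting $\Pi=\Pi_{1}+\Pi_{2}$ on a ball slightly larger than $B(z_{0},\rho)$: let $\Pi_{1}$ be the Newtonian potential of $\partial_{i}\partial_{j}(u_{i}u_{j})$, so that Calder\'on--Zygmund theory gives $\|\Pi_{1}\|_{L^{3/2}}\lesssim\|u\|_{L^{3}}^{2}$, small by the previous step, while $\Pi_{2}=\Pi-\Pi_{1}$ is harmonic in space on the interior region. The interior mean-value estimate for harmonic functions then yields $\|\Pi_{2}\|_{L^{\infty}}\lesssim\|\Pi_{2}\|_{L^{1}}\leq\|\Pi\|_{L^{1}}+\|\Pi_{1}\|_{L^{1}}\lesssim\varepsilon$, so $\Pi_{2}$ is small in $L^{\infty}$, hence in $L^{3/2}$, on the inner cylinder. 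Combining the two parts makes the scale-invariant pressure quantity small with an absolute constant, so that \eqref{Lin} applies.

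The main obstacle is precisely this $L^{1}$ pressure: the space $L^{1}$ lies strictly below the scale-invariant $L^{3/2}$, so the pressure cannot be inserted into any classical $\epsilon$-criterion directly, and the only scale-invariant information it carries must be released through the harmonic/Calder\'on--Zygmund decomposition, where the interior $L^{\infty}$ bound of the harmonic part by its $L^{1}$ norm is the decisive point. A secondary, pervasive difficulty is to keep every constant \emph{absolute} across the full supercritical range $1\leq\f2q+\f3p<2$; this forbids crudely interpolating against the (possibly large) a priori energy of the suitable weak solution and forces one to extract all quantitative smallness from the hypotheses through the local energy inequality. One could equally target the alternative one-scale criterion \eqref{CKN}, whose pressure enters only through $\|u\Pi\|_{L^{1}}$ and $\|\Pi\|_{L^{1,5/4}}$ and is therefore reached from $\|\Pi\|_{L^{1}}$ with slightly less effort.
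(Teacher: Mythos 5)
The paper does not prove Proposition \ref{hwz} at all: it is quoted verbatim from the reference [HWZ] (He--Wang--Zhou, J. Nonlinear Sci. 2019), so your proposal must be measured against the proof given there. Your overall architecture --- reduction to a classical one-scale criterion, Calder\'on--Zygmund plus harmonic decomposition of the pressure, and a local-energy iteration for the range where $L^{q,p}$ does not embed into $L^{3}$ --- does mirror the spirit of that proof. However, there is a genuine gap at exactly the point you yourself identify as the crux. You claim that the harmonic part $\Pi_{2}$ is ``small in $L^{\infty}$, hence in $L^{3/2}$, on the inner cylinder.'' The mean-value estimate for harmonic functions acts slice-wise in time: it gives $\|\Pi_{2}(\cdot,t)\|_{L^{\infty}(B_{\mathrm{inner}})}\leq C\|\Pi_{2}(\cdot,t)\|_{L^{1}(B_{\mathrm{outer}})}$ for a.e.\ fixed $t$, and integrating the hypothesis $\|\Pi\|_{L^{1}(Q(1))}<\varepsilon$ in time yields only smallness of $\Pi_{2}$ in $L^{1}_{t}L^{\infty}_{x}$. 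The pressure of a suitable weak solution has no regularity in time (it solves an elliptic equation slice-wise), so $L^{1}$ integrability in time cannot be upgraded to the $L^{3/2}$ integrability in time that \eqref{Lin} demands; the quantity $\|\Pi_{2}\|_{L^{3/2}(Q(z_{0},\rho))}$ is simply not controlled, and your reduction to \eqref{Lin} does not close. This is not a technicality: it is precisely why an $L^{1}$ pressure hypothesis lies below every classical criterion and why [HWZ] is a separate piece of work.

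The repair --- and what the actual proof implements --- is never to let the pressure enter through a space-time Lebesgue norm of itself, but only through the term $\iint \Pi\, u\cdot\nabla\phi$ in the local energy inequality, where $\Pi_{2}\in L^{1}_{t}L^{\infty}_{x}$ is paired against $\sup_{t}\|u\|_{L^{2}}$, a quantity controlled by the left-hand side of that same inequality; one then runs a simultaneous induction over dyadic scales on the local energy, the scale-invariant $L^{3}$ quantity, and the pressure quantities, extracting all smallness from the one-scale hypothesis \eqref{optical}. Your closing remark about targeting \eqref{CKN} instead, whose pressure norm $\|\Pi\|_{L^{1,5/4}(Q(1))}$ is $L^{1}$ in time, points in the right direction, but the companion term $\|u\Pi\|_{L^{1}(Q(1))}$ then requires exactly the same pairing against the local energy, whose boundedness must itself first be produced by the iteration --- so it cannot be dispatched ``with slightly less effort'' after the velocity step. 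Relatedly, your velocity argument for $p<3$ or $q<3$ presupposes a local energy inequality in which the pressure term is already tamed; the two loops you present as sequential are in fact coupled and must be closed together, which is the real content of the cited result. Your H\"older reduction in the subrange $p,q\geq 3$ is correct as stated.
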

\begin{prop}(\cite{[WWZ]})\label{wwz1}
Let  the pair $(u,  \Pi)$ be a suitable weak solution to the 3D Navier-Stokes system \eqref{NS} in $Q(1)$.  For any  $\delta>0$, there exists an absolute positive constant $\varepsilon$
		such that if   $u$ satisfies
 \be\label{wwz}
 \iint_{Q(1)}|u|^{\f{5}{2}+\delta}dxdt\leq \varepsilon,\ee
		then, $u\in L^{\infty}(Q(1/16)).$
\end{prop}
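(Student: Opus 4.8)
\emph{Plan.} The statement is a one-scale, pressure-free $\epsilon$-regularity criterion, so my plan is to reduce it to one of the classical \emph{pressure-dependent} criteria already recalled in the introduction — either the gradient criterion \eqref{ckn} or the mixed-norm criterion of Proposition~\ref{hwz} — by showing that the smallness of $\iint_{Q(1)}|u|^{\f52+\delta}$ forces the relevant dimensionless quantities below the absolute threshold on a fixed sub-cylinder. Throughout I work with $E_{\ast}(\nabla u,\rho)=\f1\rho\iint_{Q(\rho)}|\nabla u|^2$, $A(\rho)=\sup_t\f1\rho\int_{B(\rho)}|u|^2$, $C(\rho)=\f1{\rho^2}\iint_{Q(\rho)}|u|^3$ and $D(\rho)=\f1{\rho^2}\iint_{Q(\rho)}|\Pi-\overline{\Pi}_\rho|^{3/2}$, and I use that a suitable weak solution obeys the local energy inequality on $Q(1)$ together with the a priori bounds $u\in L^{10/3}_{loc}$, $\Pi\in L^{3/2}_{loc}$.

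\emph{Step 1 (the velocity term, with an absolute constant).} The delicate point is that $L^{\f52+\delta}$ is only supercritical for small $\delta$, so $\iint|u|^3$ is not directly dominated by $\iint|u|^{\f52+\delta}$. I would interpolate $\|u\|_{L^3(Q(\rho))}\le\|u\|_{L^{\f52+\delta}(Q(\rho))}^{1-\theta}\|u\|_{L^{10/3}(Q(\rho))}^{\theta}$ with $\theta\in(0,1)$ fixed by the exponents, and then bound the $L^{10/3}$ factor by $A(\rho)+E_{\ast}(\nabla u,\rho)$ via the parabolic Sobolev embedding $L^\infty_tL^2_x\cap L^2_t\dot H^1_x\hookrightarrow L^{10/3}$. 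Since $\theta<1$, Young's inequality lets me \emph{absorb} this energy factor into the left-hand side of the local energy inequality, with a universal absorption constant. This yields $E_{\ast}(\nabla u,\rho/2)+A(\rho/2)\le C(\delta)\B(\iint_{Q(1)}|u|^{\f52+\delta}\B)^{\gamma}$ with $C(\delta),\gamma$ \emph{independent of the solution} — which is exactly what keeps $\varepsilon$ absolute, rather than forcing it to depend on the a priori energy of $u$.

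\emph{Step 2 (the pressure term).} For the pressure I would use the classical local decomposition. Fix a cut-off $\phi$ equal to $1$ on $B(1/2)$ and supported in $B(3/4)$, and split $\Pi=\Pi_{\mathrm{loc}}+\Pi_{h}$ with $\Pi_{\mathrm{loc}}(x,t)=\int\partial_i\partial_j\Gamma(x-y)\,\phi(y)\,u_iu_j(y,t)\,dy$. Calder\'on--Zygmund gives $\|\Pi_{\mathrm{loc}}\|_{L^{\f54+\delta/2}(Q(3/4))}\le C\|u\|_{L^{\f52+\delta}(Q(3/4))}^{2}$, which is small with an absolute constant. The remainder $\Pi_h$ is harmonic in $B(1/2)$ for a.e.\ $t$, so interior estimates for harmonic functions yield, after subtracting the spatial mean, the decay $D_h(\tau\rho)\le C\tau\,D_h(\rho)$; recall that the local energy inequality is invariant under $\Pi\mapsto\Pi-\overline{\Pi}(t)$, so this mean subtraction is harmless. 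Feeding the two pieces into $D$ gives the CKN-type recursion $D(\tau\rho)\le C\tau\,D(\rho)+C\tau^{-2}C(\rho)$.

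\emph{Step 3 (closing the iteration, and the main obstacle).} Combining Step 1, the Sobolev bound $C(\rho)\lesssim (A(\rho)+E_{\ast}(\nabla u,\rho))^{3/2}$, and the pressure recursion of Step 2, I obtain a coupled iteration for $(C+D)(\tau^{k})$ in which the velocity input is controlled, with absolute constants, by $\B(\iint_{Q(1)}|u|^{\f52+\delta}\B)^{\gamma}$ while the harmonic part contributes a genuine gain $\tau$. Choosing $\tau$ small to beat the harmonic coefficient and then $\varepsilon=\varepsilon(\delta)$ small drives $C(\rho_k)+D(\rho_k)$ below the absolute threshold of \eqref{Lin} — equivalently $E_{\ast}(\nabla u,\rho_k)$ below that of \eqref{ckn}, or the hypothesis of Proposition~\ref{hwz} — at a fixed scale $\rho_k\gtrsim 1/16$; translating the argument over the points of $Q(1/16)$ then gives $u\in L^{\infty}(Q(1/16))$. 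The essential difficulty throughout is the \emph{non-local harmonic part} $\Pi_h$ of the pressure, which the local smallness of $u$ cannot make small and which is tamed only through its interior-decay estimate inside the coupled iteration; the second, subtler point is preserving the absoluteness of $\varepsilon$ for small $\delta$, which is secured by generating the auxiliary $L^{10/3}$ bound from the energy inequality itself and absorbing it, instead of importing the solution-dependent global energy of $u$.
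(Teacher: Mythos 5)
First, note that the paper itself offers no proof of this proposition: it is quoted verbatim from \cite{[WWZ]}, so the only fair comparison is with the strategy of that cited work. Measured against what such a proof must accomplish, your proposal has a genuine gap, and it sits exactly at the point you flag as "the main obstacle." In Step 1 you claim $E_{\ast}(\nabla u,\rho/2)+A(\rho/2)\le C(\delta)\big(\iint_{Q(1)}|u|^{5/2+\delta}\big)^{\gamma}$ with constants independent of the solution, but the local energy inequality you absorb into also contains the term $\iint \Pi\, u\cdot\nabla\phi$, and the definition of a suitable weak solution gives finiteness, not any bound, on $\|\Pi\|_{L^{3/2}(Q(1))}$; your absorption handles only the cubic term (there your exponent bookkeeping, $3\lambda/2<1$ for $\delta>0$, is correct), so the asserted absolute energy bound at a fixed scale does not follow. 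The same defect is fatal in Step 3: the recursion $D(\tau\rho)\le C\tau D(\rho)+C\tau^{-2}C(\rho)$ must be seeded with $D(1)$, which can be arbitrarily large under the hypothesis, so an absolute number of iteration steps cannot drive $C+D$ below the threshold of \eqref{Lin} ``at a fixed scale $\rho_k\gtrsim 1/16$.'' A concrete counterexample to that fixed-scale claim: take $u(x,t)=g(t)\nabla h(x)$ with $h$ harmonic and $\Pi=-g'(t)h-\tfrac12 g^2|\nabla h|^2$, an exact smooth (hence suitable) solution; choosing $|g|\le \epsilon_0$ small but rapidly oscillating makes $\iint_{Q(1)}|u|^{5/2+\delta}$ as small as you wish while $g'$, and hence the pressure oscillation $D$ at the scale $1/16$ (or any fixed scale), is arbitrarily large. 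So no argument can conclude smallness of $C+D$ at a fixed scale from the velocity hypothesis alone.

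The repair cannot be cosmetic, because the two effects you must play off each other pull in opposite directions: the harmonic part $\Pi_h$ becomes negligible only after solution-dependently many steps (or in a $\limsup$ as $\rho\to 0$, so that one closes with \eqref{ckn} rather than with a one-scale criterion at a fixed radius), while descending in scale degrades the supercritical smallness, since dimensionlessly $\rho^{(5/2+\delta)-5}\iint_{Q(\rho)}|u|^{5/2+\delta}\le \varepsilon\,\rho^{-(5/2-\delta)}$ blows up as $\rho\to0$; thus your claim in Step 3 that the velocity input stays controlled by $\varepsilon^{\gamma}$ ``with absolute constants'' uniformly along the iteration is also unjustified. Designing an induction whose hypothesis propagates boundedness/decay of $A,E,C,D$ across scales so that the $\varepsilon$-gain beats this $\rho^{-(5/2-\delta)}$ loss for exactly long enough to outlast the harmonic pressure is the actual content of \cite{[WWZ]}, and it is the piece your outline does not supply. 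A smaller mismatch in the same vein: Calder\'on--Zygmund gives smallness of $\Pi_{\mathrm{loc}}$ in $L^{5/4+\delta/2}$, which for small $\delta$ lies below the $L^{3/2}$ integrability your quantity $D$ requires; smallness of $\|\Pi_{\mathrm{loc}}\|_{L^{3/2}}$ needs $\|u\|_{L^{3}}$ small, which is not given and must itself be routed through the energy interpolation inside the iteration. In short, your skeleton assembles the right classical components (interpolation plus absorption, pressure splitting, CKN recursion), but the quantitative mechanism that makes a pressure-free, one-scale, supercritical criterion possible is missing, and the fixed-scale form of your conclusion is false as stated.
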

\begin{lemma}(Kato-Ponce Commutator and Product Estimates \cite{[KP1]})
Let $\alpha>0$, $p\in(1,\infty)$ and $p_{i}\in(1,\infty)$, $i=1,2,3,4.$  Then there
exists a positive constant $C$ such that
\be\label{KPC}
\|\Lambda^{\alpha}(fg)-f\Lambda^{\alpha}g\|_{L^{p}}\leq C(\|\nabla f\|_{L^{p_{1}}}\|\Lambda^{\alpha-1}g\|_{L^{p_{2}}}+
\|  \Lambda^{\alpha}f\|_{L^{p_{3}}}\|g\|_{L^{p_{4}}})
\ee
and
\be\label{KPI}
\|\Lambda^{\alpha}(fg)\|_{L^{p}}\leq C
(\|\Lambda^{\alpha}f\|_{L^{p_{1}}}\|g\|_{L^{p_{2}}}+
\|f\|_{L^{p_{3}}}\|\Lambda^{\alpha}g\|_{L^{p_{4}}}),
\ee
where $\f1p=\frac{1}{p_{1}}+\frac{1}{p_{2}}=\frac{1}{p_{3}}+\frac{1}{p_{4}}$.
\end{lemma}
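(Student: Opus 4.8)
The plan is to prove both \eqref{KPC} and \eqref{KPI} by the now-standard Littlewood--Paley / paraproduct method, which handles the full range $p,p_i\in(1,\infty)$; the original argument of Kato and Ponce via the Coifman--Meyer multiplier theorem is an alternative but is less transparent for the commutator. Write $\Delta_j$ for the Littlewood--Paley projection onto frequencies $|\xi|\sim 2^j$ and $S_j=\sum_{k<j}\Delta_k$ for the low-frequency cutoff, and recall Bony's decomposition $fg=T_fg+T_gf+R(f,g)$, where $T_fg=\sum_j S_{j-2}f\,\Delta_j g$ is the low--high paraproduct, $T_gf$ the high--low paraproduct, and $R(f,g)=\sum_{|j-k|\le1}\Delta_j f\,\Delta_k g$ the resonant term. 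The two ingredients I would invoke throughout are the square-function characterization $\|h\|_{L^p}\sim\big\|(\sum_j|\Delta_j h|^2)^{1/2}\big\|_{L^p}$ valid for $p\in(1,\infty)$, and the Fefferman--Stein vector-valued maximal inequality, which lets me replace the low-frequency cutoffs $S_{j}f$ by the Hardy--Littlewood maximal function $Mf$ inside an $\ell^2$-sum; this is precisely what keeps all exponents in $(1,\infty)$ rather than forcing $L^\infty$ or $\mathit{BMO}$ endpoints.

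For the fractional Leibniz rule \eqref{KPI} I would estimate $\Lambda^\alpha$ of each of the three pieces. In the low--high piece every summand $S_{j-2}f\,\Delta_j g$ has frequency support of size $\sim 2^j$, so $\Lambda^\alpha$ acts there essentially as the multiplier $2^{j\alpha}$ and may be pulled onto $g$, giving $\Lambda^\alpha T_fg\approx T_f(\Lambda^\alpha g)$ up to harmless errors. Almost-orthogonality of the summands and the square function, followed by the maximal inequality and Hölder with $1/p=1/p_3+1/p_4$, then yield
\[
\|T_f(\Lambda^\alpha g)\|_{L^p}\lesssim \Big\|\,\sup_j|S_{j-2}f|\,\big(\textstyle\sum_j|\Delta_j\Lambda^\alpha g|^2\big)^{1/2}\Big\|_{L^p}\lesssim\|f\|_{L^{p_3}}\|\Lambda^\alpha g\|_{L^{p_4}}.
\]
The high--low piece $\Lambda^\alpha T_gf$ is symmetric and produces $\|\Lambda^\alpha f\|_{L^{p_1}}\|g\|_{L^{p_2}}$. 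The resonant piece is the only delicate one, since the output frequency of $\Delta_jf\,\Delta_kg$ with $|j-k|\le1$ can be far below $2^j$, so $\Lambda^\alpha$ cannot be localized by $2^{j\alpha}$ alone; here I would write $\Lambda^\alpha R(f,g)=\sum_j\Lambda^\alpha\Delta_{\le j+3}(\Delta_jf\,\tilde\Delta_jg)$, move $\Lambda^\alpha$ onto the high-frequency factor, and use that $\alpha>0$ makes the resulting geometric series in the frequency gap summable, again producing $\|\Lambda^\alpha f\|_{L^{p_1}}\|g\|_{L^{p_2}}$ after a square-function/maximal argument.

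For the commutator estimate \eqref{KPC} the decisive point is that subtracting $f\Lambda^\alpha g$ cancels the leading part of the low--high piece. Writing that contribution as $\sum_j[\Lambda^\alpha,S_{j-2}f]\Delta_j g$ — the diagonal term $S_{j-2}f\,\Lambda^\alpha\Delta_j g$ being exactly what $f\Lambda^\alpha g$ supplies — I would expand the symbol by the mean value theorem: since $S_{j-2}f$ carries frequency $\ll2^j$ while $\Delta_j g$ carries frequency $\sim2^j$, the difference $|\xi+\zeta|^\alpha-|\xi|^\alpha\approx\alpha|\xi|^{\alpha-2}\xi\cdot\zeta$ shows that $[\Lambda^\alpha,S_{j-2}f]\Delta_j$ is a bilinear operator carrying one derivative on $f$ and order $\alpha-1$ on $g$. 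This converts the bound into $\|\nabla f\|_{L^{p_1}}\|\Lambda^{\alpha-1}g\|_{L^{p_2}}$ by the same square-function/maximal mechanism. The high--low and resonant contributions of $\Lambda^\alpha(fg)$ already place all derivatives on $f$, hence are bounded by $\|\Lambda^\alpha f\|_{L^{p_3}}\|g\|_{L^{p_4}}$, and the corresponding pieces of $f\Lambda^\alpha g$ (where $f$ sits at the higher frequency) are of the same type; collecting these gives the second term on the right of \eqref{KPC}.

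I expect the main obstacle to be the commutator term: making rigorous the symbol expansion of $[\Lambda^\alpha,S_{j-2}f]$ so that the error is a genuine Coifman--Meyer bilinear multiplier with a clean gain of one derivative on $f$, and checking that the resulting estimates sum in $j$ uniformly. A secondary difficulty, forced by the hypothesis $p_i\in(1,\infty)$, is that every step must be routed through the vector-valued Fefferman--Stein inequality rather than a pointwise $L^\infty$ bound, and that the resonant term in \eqref{KPI} be organized so the $\alpha>0$ gain genuinely closes the frequency summation.
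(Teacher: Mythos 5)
The paper itself contains no proof of this lemma: it is quoted verbatim, with attribution, from Kato--Ponce \cite{[KP1]}, so there is nothing internal to compare your argument against step by step. Your paraproduct scheme is nevertheless a correct and essentially complete roadmap, and it is in fact a genuinely different route from the one behind the cited reference: Kato and Ponce worked with the inhomogeneous operator $J^{\alpha}=(1-\Delta)^{\alpha/2}$, proved the commutator bound with $L^{\infty}$ endpoints (i.e. $\|\nabla f\|_{L^{\infty}}\|J^{\alpha-1}g\|_{L^{p}}+\|J^{\alpha}f\|_{L^{p}}\|g\|_{L^{\infty}}$) via Coifman--Meyer multiplier theory and interpolation, whereas the homogeneous statement with a full H\"older splitting $\f1p=\f{1}{p_1}+\f{1}{p_2}=\f{1}{p_3}+\f{1}{p_4}$, all $p_i\in(1,\infty)$, is precisely what the later Littlewood--Paley treatments (Grafakos--Oh for the Leibniz rule, D.~Li for the commutator) deliver by the Bony decomposition, square-function, and Fefferman--Stein machinery you describe. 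So your route is not merely an alternative: it is the one that actually proves the lemma in the generality the paper states, which the literal theorem of \cite{[KP1]} does not cover. Two points in your sketch deserve tightening. First, in the commutator estimate the subtraction of $f\Lambda^{\alpha}g$ cancels only the diagonal low--high term $S_{j-2}f\,\Lambda^{\alpha}\Delta_j g$; the high--low and resonant pieces of $f\Lambda^{\alpha}g$ (where $\Lambda^{\alpha}g$ sits at \emph{low} frequency against high-frequency $\Delta_j f$) do not cancel against anything and must be bounded by $\|\Lambda^{\alpha}f\|_{L^{p_3}}\|g\|_{L^{p_4}}$ by trading $2^{k\alpha}$ against $2^{-j\alpha}$ across the frequency gap, which again uses $\alpha>0$ for summability --- you wave at this with ``of the same type,'' but it is a separate estimate, not a symmetry. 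Second, the mean-value expansion of $[\Lambda^{\alpha},S_{j-2}f]\Delta_j$ should be formalized by checking that the renormalized symbol $\bigl(|\xi+\zeta|^{\alpha}-|\zeta|^{\alpha}\bigr)/\bigl(|\xi|\,|\zeta|^{\alpha-1}\bigr)$, restricted to $|\xi|\ll|\zeta|$, obeys Coifman--Meyer derivative bounds; once that is done the error terms are second order in $|\xi|/|\zeta|$ and sum trivially. With those two steps made explicit your plan closes, including for $0<\alpha<1$ where $\Lambda^{\alpha-1}$ is a negative-order Riesz potential, since it only ever acts on the frequency-localized high piece of $g$.
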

\section{Box dimension of possible singular points set of suitable weak solutions}  \label{sec3}
\setcounter{section}{3}\setcounter{equation}{0}
This section contains the proof of Theorem \ref{the1.1}, Corollary \ref{coro1.2} and Theorem \ref{the1.2}. The key point is an application of the $\epsilon$-regularity  criterion  \eqref{optical} and \eqref{wwz} at one scale.

\begin{proof}[Proof of Theorem \ref{the1.1}]
We present the   proof by contradiction. We suppose that $ \dim_{B}(\wred{\mathcal{S}})> \max\{p,q\}(\f{2}{q}+\f{  3}{p}-1). $ We pick a constant $\alpha$ such that  $\alpha_{0}=\max\{p,q\}(\f{2}{q}+\f{  3}{p}-1)<\alpha<\dim_{B}(\wred{\mathcal{S}})$. Therefore, using the definition of the box dimension, we know that there exists a sequence  $\delta_{j}\rightarrow0$ such that $N(\mathcal{S},\delta_{j})>\delta_{j}^{-\alpha}.  $We assume that
$(x_{i},t_{i})_{i=1}^{N(\mathcal{S},\delta_{j})}$ be a collection of $\delta_{j}$- separated points in $\mathcal{S}$. By the regularity criterion
 in \wred{Proposition \ref{hwz}}, for any $(x_{i},t_{i})\in \mathcal{S}$, we get
$$\ba
 \int_{t_{i}-\wred{\delta_{j}^{2}}}^{t_{i}}
\B[\B(\int_{B_{i} ( \delta_{j})}
|  u |^{p}  dx\B)^{\f{q}{p}} \wred{+}
\B(\int_{B_{i} ( \delta_{j})}
|  \Pi |^{p/2}  dx\B)^{\f{q}{p}}\B]dt
> \delta_{j}^{(-p+3)\f{q}{p}+2}\varepsilon_{1},  \ea$$
where $B_{i}(\mu):=B(x_{i}, \mu)$. \wred{Thus we have}
\be\label{bh3.1}\ba
 \sum^{N(\mathcal{S},\,\wred{\delta_{j}})}_{i=1 } \int_{t_{i}-\wred{\delta_{j}^{2}}}^{t_{i}}
\B[\B(\int_{B_{i} ( \delta_{j})}
|  u |^{p}  dx\B)^{\f{q}{p}} \wred{+}
\B(\int_{B_{i} ( \delta_{j})}
|  \Pi |^{p/2}  dx\B)^{\f{q}{p}}\B]dt
>N(\mathcal{S},\,\wred{\delta_{j}}) \delta_{j}^{(-p+3)\f{q}{p}+2}\varepsilon_{1}\wred{.}
 \ea\ee
 The pressure equation help us to obtain, for $p>2$, \wred{$q\geq 2$,}
\be\label{pres}\|\Pi\|^{p/2}_{L^{q/2}(0,T;L^{p/2}(\mathbb{R}^{3}))}\leq C\|u\|^{p}_{L^{q}(0,T;L^{p}(\mathbb{R}^{3}))}. \ee
For the case $\f{q}{p}>1$, we know that $\alpha_{0}=q(\f{2}{q}+\f{  3}{p}-1)$.

Now, we can apply the inequality $ \sum^{N(\mathcal{S},\,\wred{\delta_{j}})}_{i=1 } (a_{i})^{\f{q}{p}}\leq (\sum^{N(\mathcal{S},\,\wred{\delta_{j}})}_{i=1 }a_{i})^{\f{q}{p}}$ to control the left hand side of  \eqref{bh3.1} by $\|u\|^{\wred{q}}_{L^{q}(0,T;L^{p}(\mathbb{R}^{3}))} +\|\Pi\|^{\wred{q/2}}_{L^{q/2}(0,T;L^{p/2}(\mathbb{R}^{3}))}$. This together with \eqref{pres} implies that
\be
C \geq N(\mathcal{S},\,\wred{\delta_{j}}) \delta_{j}^{(-p+3)\f{q}{p}+2}\varepsilon_{1}\geq\delta_{j}^{(-p+3)\f{q}{p}+2-\alpha}\varepsilon_{1}\wred{.}
\ee
We immediately get a contradiction as $j\rightarrow\infty.$

For the rest case $\f{q}{p}\leq1$, we invoke the inequality  $ \sum^{N(\mathcal{S},\,\wred{\delta_{j}})}_{i=1 } (a_{i})^{\f{q}{p}}\leq N^{(1-\f{q}{p})}(\mathcal{S},\,\wred{\delta_{j}})(\sum^{N(\mathcal{S},\,\wred{\delta_{j}})}_{i=1 }a_{i})^{\f{q}{p}}$ in the proof. \wred{
With a slight modification of } the above proof, we see that $C \geq  N^{\f{q}{p}}(\mathcal{S},\,\wred{\delta_{j}}) \delta_{j}^{(-p+3)\f{q}{p}+2}\varepsilon_{1}$.
This means that
\be
C \geq N(\mathcal{S},\,\wred{\delta_{j}}) \delta_{j} ^{\f{p}{q}[(-p+3)\f{q}{p}+2]} \varepsilon_{1}\geq\delta_{j}^{\f{p}{q}[(-p+3)\f{q}{p}+2]-\alpha}\varepsilon_{1}\wred{.}
\ee
This lead a contradiction as $j\rightarrow\infty$. The proof of this theorem is achieved.
\end{proof}

\begin{proof}[Proof of Corollary \ref{coro1.2}]
It follows from
 $1\leq\f{2}{q}+\f{  3}{p}\leq\f32$ with $\f2p+\f2q<1$ and $\f3p+\f1q<1$ that $u\in L^{4}(0,T;L^{4}(\mathbb{R}^{n}))$. Thanks to the work \cite{[Taniuchi]}, we observe \wred{that} $u$ is  a suitable weak solution. Following the path of the above proof, we complete the proof.
\end{proof}

\begin{proof}[Proof of Theorem \ref{the1.2}]
As the same manner of proof of Theorem \ref{the1.1} and
  replacing the appliction of  the regularity criterion \eqref{optical}
  by  \eqref{wwz}, the proof of this theorem is completed.
\end{proof}

\section{ Hausdorff dimension of possible singular points set of suitable weak solutions}
 \label{sec4}
\setcounter{section}{4}\setcounter{equation}{0}
First, we prove  Theorem  \ref{the1.5}. As an application of this theorem, we can achieve    the proof of  Corollary \ref{coro1.6}.       To this end, we prove the following lemma, which roughly indicates that the smallness of $\nabla^{\ast} (\nabla u)^{\ast}$  yields the
the smallness of $\nabla u$.
\begin{lemma}\label{lem3.3}For $0<\mu\leq\f{1}{2}\rho$,
there is an absolute constant $C$  independent of  $\mu$ and $\rho$,~ such that
$$
E_{\ast}(\nabla u;\mu)\leq(\f{\rho}{\mu})E^{\ast}_{\ast}(\nabla^{\ast} (\nabla u)^{\ast};\rho)+(\f{\mu}{\rho})^{2}E_{\ast}(\nabla u;\rho).
$$
\end{lemma}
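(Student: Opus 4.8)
The plan is to run the standard Caffarelli--Kohn--Nirenberg excess-decay splitting, treating each component of $\nabla u$ as the trace on $\{y=0\}$ of its Caffarelli--Silvestre extension $(\nabla u)^{\ast}$. Fix a time $t$ and write $\overline{(\nabla u)}_{\rho}(t)$ for the spatial mean of $\nabla u(\cdot,t)$ over $B(\rho)$. The first move is the decomposition $\nabla u = \big(\nabla u - \overline{(\nabla u)}_{\rho}\big) + \overline{(\nabla u)}_{\rho}$, which gives $|\nabla u|^{2}\leq 2|\nabla u - \overline{(\nabla u)}_{\rho}|^{2} + 2|\overline{(\nabla u)}_{\rho}|^{2}$; integrating this over $Q(\mu)$ and normalizing by $1/\mu$ splits $E_{\ast}(\nabla u;\mu)$ into an oscillation contribution and a mean contribution, which I will match to the two terms on the right.

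For the mean term I would use Cauchy--Schwarz in the form $|\overline{(\nabla u)}_{\rho}(t)|^{2}\leq \frac{1}{|B(\rho)|}\int_{B(\rho)}|\nabla u(x,t)|^{2}\,dx$. Multiplying by $|B(\mu)|$, using the volume ratio $|B(\mu)|/|B(\rho)|=(\mu/\rho)^{3}$, then integrating in time over $(-\mu^{2},0)\subseteq(-\rho^{2},0)$ and dividing by $\mu$, the powers combine as $\frac{1}{\mu}(\mu/\rho)^{3}\cdot\rho=(\mu/\rho)^{2}$. This reproduces exactly the factor $(\mu/\rho)^{2}E_{\ast}(\nabla u;\rho)$, and is the elementary half of the argument.

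For the oscillation term I would invoke the weighted Poincar\'e inequality \eqref{keyinq1} with $u$ replaced componentwise by $\nabla u$ and with radius $\rho$ in place of $\mu$, namely $\int_{B(\rho/2)}|\nabla u - \overline{(\nabla u)}_{\rho}|^{2}\,dx \leq C\rho^{2s}\int_{B^{\ast}(\rho)}y^{1-2s}|\nabla^{\ast}(\nabla u)^{\ast}|^{2}\,dx\,dy$ at each fixed $t$. This is precisely where the hypothesis $\mu\leq\frac{1}{2}\rho$ is used: it forces $B(\mu)\subseteq B(\rho/2)$, so the smaller ball $B(\mu)$ on which we integrate is covered by the domain produced by the Poincar\'e estimate. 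Integrating in time over $(-\mu^{2},0)\subseteq(-\rho^{2},0)$ and dividing by $\mu$, the prefactor $C\rho^{2s}/\mu$ multiplies $\iint_{Q^{\ast}(\rho)}y^{1-2s}|\nabla^{\ast}(\nabla u)^{\ast}|^{2}=\rho^{1-2s}E^{\ast}_{\ast}(\nabla^{\ast}(\nabla u)^{\ast};\rho)$, and the powers collapse to $C(\rho/\mu)E^{\ast}_{\ast}(\nabla^{\ast}(\nabla u)^{\ast};\rho)$. Adding the two contributions yields the asserted estimate up to an absolute constant, with the harmless factors of $2$ from the split and the constant $C$ from \eqref{keyinq1} absorbed.

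The only genuinely delicate point I would record is the identification underlying the oscillation step: that the trace of $(\nabla u)^{\ast}$ on $\{y=0\}$ is $\nabla u$, and hence that \eqref{keyinq1} may be applied with $\nabla u$ in the role of $u$, simply by running the lemma on each scalar component $\partial_{x_{k}}u$ and summing in $k$. The weight $y^{1-2s}$, the exponent $2s$, and all normalizations are preserved under this substitution, so once this identification is granted the remainder is pure bookkeeping of the powers of $\mu$ and $\rho$ described above, with no further analytic input required.
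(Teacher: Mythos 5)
Your proposal is correct and follows essentially the same route as the paper: the CKN-type splitting of $\nabla u$ into its oscillation and its mean over $B(\rho)$, Cauchy--Schwarz plus the volume ratio $(\mu/\rho)^{3}$ for the mean part, and the weighted Poincar\'e inequality \eqref{keyinq1} applied componentwise to $\nabla u$ at scale $\rho$ for the oscillation part, followed by time integration over $(-\mu^{2},0)\subseteq(-\rho^{2},0)$ and the same bookkeeping of powers. The one point you flag explicitly --- that $(\nabla u)^{\ast}$ has trace $\nabla u$ so \eqref{keyinq1} transfers --- is exactly the step the paper passes over with ``that is,'' and your version (including the absorbed constant $C$) matches the intended statement.
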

\begin{proof}
With the help of the triangle inequality, the H\"older inequality and \eqref{keyinq1}, we see that
\be\label{lem2.31} \begin{aligned}
\int_{B(\mu)}|u|^{2}dx &\leq C\int_{B(\mu)}|u-\bar{u}_{{\rho}}|^{2}
+C\int_{B(\mu)}|\bar{u}_{{\rho}}|^{2} \\
&\leq C\B(\int_{B(\f{\rho}{2})}|u-\bar{u}_{{\rho}}|^{2}\B)
  +
 C\f{\mu^{3}}{\rho^{3}}\B( \int_{B(\rho)}|u|^{2}\B)
\\
&\leq C \rho^{2s}
 \B(\int_{B^{\ast}(\rho)}y^{\wred{1-2s}}|  \nabla^{\ast}  u^{\ast}|^{2}dxdy\B) + C\f{\mu^{3}}{\rho^{3}}\B( \int_{B(\rho)}|u|^{2}\B),
 \end{aligned}
\ee
that is,
$$
\int_{B(\mu)}|\nabla u|^{2}dx    \leq C\rho^{2s}
 \B(\int_{B^{\ast}(\rho)}y^{\wred{1-2s}} | \nabla^{\ast} (\nabla u)^{\ast}|^{2}dxdy\B) + C\f{\mu^{3}}{\rho^{3}}\B( \int_{B(\rho)}|\nabla u|^{2}\B).
$$

Integrating in time on $(\wred{-\mu^{2}},\,0)$ this inequality, we obtain
$$\iint_{Q(\mu)}|\nabla u|^{2}dx    \leq C\rho^{2s}
 \B(\iint_{Q^{\ast}(\rho)}y^{\wred{1-2s}} | \nabla^{\ast} (\nabla u)^{\ast}|^{2}dxdy\B) + C\f{\mu^{3}}{\rho^{3}}\B( \iint_{Q(\rho)}|\nabla u|^{2}\B)\wred{,}$$
which leads to
$$
E_{\ast}(\nabla u;\mu)\leq(\f{\rho}{\mu})E^{\ast}_{\ast}(\nabla^{\ast} (\nabla u)^{\ast};\rho)+(\f{\mu}{\rho})^{2}E_{\ast}(\nabla u;\rho).
$$
 This achieves the proof of this lemma.
\end{proof}

\begin{proof}[Proof of Theorem \ref{the1.5}]
From Lemma \ref{lem3.3} and the iteration method as in \cite{[CKN]}, we know that
$$\limsup_{\mu\rightarrow0} E_{\ast}(\nabla u;\mu)\leq C\limsup_{\mu\rightarrow0} E^{\ast}_{\ast}(\nabla^{\ast} (\nabla u)^{\ast};\mu).$$
The famous  $\epsilon$-regularity  criterion \eqref{ckn} helps us to finish the proof.
\end{proof}

\begin{proof} [Proof of Theorem \ref{the1.4}]
For the case $s=0$, we complete the proof by the Caffarelli-Kohn-Nirenberg theorem in \cite{[CKN]}. For the other borderline case $s=1/2$, by the fact $\dot{H}^{\f32}(\mathbb{R}^{3})\hookrightarrow BMO$ and the Serrin class $ L^{2}(0,T;BMO)$ due to Kozono and Taniuchi \cite{[KT]}, we know there is no singular point in the weak solutions of the Navier-Stokes equations. Hence, we achieve the proof of two borderline cases. For the rest cases, from \eqref{CSK}, we derive from $  u\in L^{2}(0,T;\dot{H}^{s+1}(\mathbb{R}^{3}))$ with $0< s<\f12$ that
$$\int\int_{\mathbb R^4_+} y^{1-2s} |\nabla^{\ast} (\nabla u)^{\ast}|^2 (x,y,t)\, dx\, dy dt<+\infty.
$$
At this stage, the Vitali covering lemma used in \cite{[CKN]} together with Theorem \ref{the1.5} yields that $1-2 s$ dimension of potential singular points set of \wred{suitable} weak solutions
satisfying  $  u\in L^{2}(0,T;\dot{H}^{s+1}(\mathbb{R}^{3}))$ for $0<s <\f12$   is zero.  The process is standard, hence, we omit the detail here.
\wred{In summary}, the desired result is derived.
\end{proof}
The proof of Corollary \ref{coro1.6} is a consequence of  the following two lemmas. \begin{lemma}\label{lem4.2} Let $\nabla u\in L^{q}(0,T;L^{p}(\mathbb{R}^{3}))$ for $2\leq\f{2}{q}+\f{  3}{p}\leq\f52$ with $\f52-\f3p-\f{5}{2q}\geq0, 2<p<\f{54+12\sqrt{14}}{25},\wred{1<q\leq2}$. Then
$$u\in L^{\infty}(0,T;L^{2}(\mathbb{R}^{3}))\cap L^{2}(0,T;\dot{H}^{1+s}(\mathbb{R}^{3})),$$
where $\wred{0\leq} s=\f{\f52-\f3p-\f{5}{2q}}{1-\f1q}\wred{\leq \frac{1}{2}}$.
 \end{lemma}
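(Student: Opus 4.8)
The first membership $u\in L^{\infty}(0,T;L^{2})\cap L^{2}(0,T;\dot H^{1})$ is nothing but the energy inequality satisfied by the (suitable/Leray--Hopf) weak solution with $L^{2}$ initial data, so the real content is the gain from $\dot H^{1}$ to $\dot H^{1+s}$. The plan is to run an \emph{a priori} fractional energy estimate at the level $\dot H^{s}$, started from an almost-every time $t_{0}$ (for which $u(t_{0})\in \dot H^{1}\cap L^{2}\subset\dot H^{s}$ since $0\le s\le1$) and then let $t_{0}\to0$. Applying $\Lambda^{s}$ to \eqref{NS}, testing against $\Lambda^{s}u$, and using that the pressure term drops by $\operatorname{div}u=0$ while the transport piece $\int u\cdot\nabla(\Lambda^{s}u)\cdot\Lambda^{s}u\,dx=0$, one arrives at
\[
\frac{d}{dt}\|\Lambda^{s}u\|_{L^{2}}^{2}+2\|\Lambda^{1+s}u\|_{L^{2}}^{2}=-2\int_{\mathbb R^{3}}\big(\Lambda^{s}(u\cdot\nabla u)-u\cdot\nabla\Lambda^{s}u\big)\cdot\Lambda^{s}u\,dx=:N(t).
\]
Integrating in time turns the left-hand dissipation into exactly $\|u\|_{L^{2}(0,T;\dot H^{1+s})}^{2}$, so it suffices to bound $\int_{0}^{T}|N|\,dt$.

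For the nonlinear term I would estimate the commutator by the Kato--Ponce inequality \eqref{KPC} with $f=u_{j}$, $g=\partial_{j}u$ and $\alpha=s$; since $\Lambda^{s-1}\nabla u$ and $\Lambda^{s}u$ agree up to bounded Riesz transforms, both resulting terms are of the form $\|\nabla u\|_{L^{a}}\|\Lambda^{s}u\|_{L^{\ast}}$. Pairing with $\Lambda^{s}u$ by H\"older gives, schematically,
\[
|N(t)|\le C\,\|\nabla u\|_{L^{p}}\,\|\Lambda^{s}u\|_{L^{b}}\,\|\Lambda^{s}u\|_{L^{c}},\qquad \tfrac1b+\tfrac1c=1-\tfrac1p,
\]
where I assign the single $\nabla u$ factor to the hypothesis space $L^{p}$. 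The two fractional factors are then handled by Gagliardo--Nirenberg, interpolating $\|\Lambda^{s}u\|_{L^{\bullet}}$ among the available \emph{a priori} norms $\|u\|_{L^{2}}$, $\|\nabla u\|_{L^{2}}$ and the sought dissipation $\|\Lambda^{1+s}u\|_{L^{2}}$, choosing the weights so as to spend as little of $\|\Lambda^{1+s}u\|_{L^{2}}$ as possible. After this one integrates in time and closes the estimate by a combination of H\"older in time---distributing the factors into $L^{q}_{t}$ for $\|\nabla u\|_{L^{p}}$, $L^{\infty}_{t}$ for $\|u\|_{L^{2}}$, and $L^{2}_{t}$ for both the energy dissipation $\|\nabla u\|_{L^{2}}$ and the target $\|\Lambda^{1+s}u\|_{L^{2}}$---and Young's inequality, which absorbs a small multiple of $\int_{0}^{T}\|\Lambda^{1+s}u\|_{L^{2}}^{2}\,dt$ into the left-hand side and leaves a residual that is finite precisely under the stated hypotheses.

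The algebra behind the threshold is governed by the identity $\tfrac{3}{p}=(\tfrac52-s)(1-\tfrac1q)$, which is exactly the relation defining $s=\frac{5/2-3/p-5/(2q)}{1-1/q}$; it is the balance that simultaneously matches the derivative counts in the Gagliardo--Nirenberg steps and the time-integrability budget $\nabla u\in L^{q}L^{p}$, $\nabla u\in L^{2}L^{2}$. The two borderlines are consistent with the rest of the paper: $\tfrac2q+\tfrac3p=2$ forces $s=\tfrac12$, and the upper end $\tfrac2q+\tfrac3p=\tfrac52$ forces $s=0$ (the $L^{2}\dot H^{1}$, Kozono--Taniuchi-type end), so that $1<q\le2$ together with $\tfrac52-\tfrac3p-\tfrac{5}{2q}\ge0$ is exactly what keeps $s\in[0,\tfrac12]$.

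The main obstacle is the multi-parameter admissibility, not any single inequality. One must choose the Gagliardo--Nirenberg weights and the time-H\"older exponents so that three competing requirements hold at once: every interpolation parameter lies in $[0,1]$; the total power of $\|\Lambda^{1+s}u\|_{L^{2}}$ arising in $N$ stays $\le2$, so Young's inequality can absorb it into the dissipation; and the leftover time integral is dominated by $\|\nabla u\|_{L^{q}L^{p}}$ and the energy. Reducing these simultaneous constraints to a single scalar condition produces a quadratic inequality in $1/p$, whose relevant root is $\frac{54+12\sqrt{14}}{25}$---precisely the stated upper bound on $p$---while the lower bound $p>2$ and the restriction $1<q\le2$ ensure the H\"older pairing against the energy dissipation is legitimate. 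Verifying that this feasible region is nonempty and sharp, i.e. that equality in the $s$-relation corresponds to the boundary of the admissible parameter set, is where the bulk of the careful bookkeeping lies.
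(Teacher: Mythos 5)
Your setup coincides with the paper's: test the equation against $\Lambda^{2s}u$, kill the transport and pressure terms by incompressibility, and bound the commutator $\Lambda^{s}(u\cdot\nabla u)-u\cdot\nabla\Lambda^{s}u$ by Kato--Ponce as $C\|\nabla u\|_{L^{p}}\|\Lambda^{s}u\|_{L^{2p/(p-2)}}$, paired with $\|\Lambda^{s}u\|_{L^{2}}$. The gap is in your closing mechanism. You propose to interpolate the fractional factors only among $\|u\|_{L^{2}}$, $\|\nabla u\|_{L^{2}}$ and $\|\Lambda^{1+s}u\|_{L^{2}}$, and then close by H\"older in time plus Young absorption of the dissipation. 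A dimension count shows this cannot cover the stated range: if the nonlinearity is bounded by $\|\nabla u\|_{L^{p}}\|\nabla u\|_{L^{2}}^{a}\|u\|_{L^{2}}^{b}\|\Lambda^{1+s}u\|_{L^{2}}^{c}$ with $1+a+b+c=3$, scaling forces $\frac{3}{p}=a+(1+s)c-2s$, while absorbing $\|\Lambda^{1+s}u\|_{L^{2}}^{c}$ by Young and integrating the remainder in time requires $\frac1q+\frac a2+\frac c2\le 1$. Combining these gives $\frac3p\le\big(2-\frac2q\big)(1+s)-2s$, which fails for every $q<2$ on the region $\frac2q+\frac3p\ge2$ (and even at $q=2$ forces $p\ge3$, excluding part of the hypothesis $2<p<\frac{54+12\sqrt{14}}{25}$). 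The problem is that with $1<q\le 2$ the factor $\|\nabla u\|_{L^{p}}\in L^{q}_{t}$ leaves essentially no room in the time-H\"older budget for either $\|\nabla u\|_{L^{2}}$ or the dissipation.

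The paper closes differently, and this is the missing idea: it never touches the dissipation. It interpolates $\|\Lambda^{s}u\|_{L^{2p/(p-2)}}$ between $\|\nabla u\|_{L^{p}}$ and $\|u\|_{L^{3/(3/2-s)}}\le C\|\Lambda^{s}u\|_{L^{2}}$, i.e.\ between the hypothesis norm and the quantity being propagated. The definition of $s$ (equivalently your identity $\frac3p=(\frac52-s)(1-\frac1q)$) makes the resulting total power of $\|\nabla u\|_{L^{p}}$ exactly $q$ and the total power of $\|\Lambda^{s}u\|_{L^{2}}$ at most $2$, so Gr\"onwall with $\|\nabla u\|_{L^{p}}^{q}\in L^{1}(0,T)$ closes the estimate. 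The quadratic threshold $p<\frac{54+12\sqrt{14}}{25}$ then comes from the admissibility condition $s\le\frac{3/p}{5/2-s-3/p}$ of this particular Gagliardo--Nirenberg step, not from an absorption constraint as you surmise. Two smaller slips in your consistency check: $s=0$ occurs on the boundary $\frac3p+\frac{5}{2q}=\frac52$ of the stated hypothesis, not on $\frac2q+\frac3p=\frac52$, and the Kozono--Taniuchi ($L^{2}_{t}\dot H^{3/2}\subset L^{2}_{t}BMO$) endpoint is $s=\frac12$, while $s=0$ is the Caffarelli--Kohn--Nirenberg ($L^{2}_{t}\dot H^{1}$) endpoint.
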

\begin{proof}
The incompressible condition allow us to get
\be\label{3.1}
\langle u\cdot\nabla\wred{\Lambda^{s}u}, \Lambda^{s}u \rangle=0.\ee
Multiplying the Navier-Stokes equations with $\Lambda^{2s}u$, using the divergence free condition and \eqref{3.1}, we know that
$$
\f12\f{d}{dt}\|\Lambda^{s}u\|^2_{L^{2}(\mathbb{R}^{3})}+\|\Lambda^{s+1}u\|^2_{L^{2}(\mathbb{R}^{3})}=-\langle\Lambda^{s}(u\cdot\nabla u)-u\cdot\nabla(\Lambda^{s}u),\Lambda^{s}u\rangle.
$$
The H\"older inequality guarantees that
$$ |\langle\Lambda^{s}(u\cdot\nabla u)-u\cdot\nabla(\Lambda^{s}u),\Lambda^{s}u\rangle|
\leq\|\Lambda^{s}(u\cdot\nabla u)-u\cdot\nabla(\Lambda^{s}u)\|_{L^{2}(\mathbb{R}^{3})}\|\Lambda^{s}u\|_{L^{2}(\mathbb{R}^{3})}.$$
By means of   Kato-Ponce commutator inequality  \eqref{KPC}, we infer that
$$
\|\Lambda^{s}(u\cdot\nabla u)-u\cdot\nabla(\Lambda^{s}u)\|_{L^{2}(\mathbb{R}^{3})}\leq C
\|\nabla u\|_{L^{p}(\mathbb{R}^{3})}\|\Lambda^{s} u\|_{L^{\f{2p}{p-2}}(\mathbb{R}^{3})},{p>2}.
$$
Consequently, we arrive at
\be\label{3.2}
\f12\f{d}{dt}\|\Lambda^{s}u\|^2_{L^{2}(\mathbb{R}^{3})}+\|\Lambda^{s+1}u\|^2_{L^{2}(\mathbb{R}^{3})}\leq C
\|\nabla u\|_{L^{p}(\mathbb{R}^{3})}\|\Lambda^{s} u\|_{L^{\f{2p}{p-2}}(\mathbb{R}^{3})}\|\Lambda^{s}u\|_{L^{2}(\mathbb{R}^{3})}.
\ee
We conclude by the fractional   Gagliardo-Nirenberg inequality   (see e.g. \cite{[WXW],[WWY]} and references therein)   and   the Sobolev inequality that,
\be\label{3.3}\ba
\|\Lambda^{s} u\|_{L^{\f{2p}{p-2}}(\mathbb{R}^{3})}\leq C\|\nabla u\|^{\f{\f{3}{p}}{\f52-s-\f3p}}_{L^{p}(\mathbb{R}^{3})}\|u\|^{\f{ \f52-s-\f6p}{\f52-s-\f3p}}_{L^{\f{3}{\f32-s}}(\mathbb{R}^{3})}
\leq C\|\nabla u\|^{\f{\f{3}{p}}{\f52-s-\f3p}}_{L^{p}(\mathbb{R}^{3})}\|\Lambda^{s}u\|^{\f{ \f52-s-\f6p}{\f52-s-\f3p}}_{L^{2}(\mathbb{R}^{3})},
\ea\ee
where we require
$$0\leq\f{\f{3}{p}}{\f52-s-\f3p} \leq1, \f52-s-\f3p>0\quad\text{and}\quad s\leq\f{\f{3}{p}}{\f{5}{2}-s-\f3p}.$$
Indeed, in the light the definition of $s$ and \wred{$1<q\leq2$}, we observe that $\f{\f{3}{p}}{\f52-s-\f3p}\leq1$.  In addition, taking advantage of the the definition of $s$ again, we know that $\f52-s-\f3p>0$. Some straightforward computations yields that  $\f{9-\sqrt{56}}{6}<\f1p<\f{9+\sqrt{56}}{6}$ guarantees that $s\leq\f{\f{3}{p}}{\f{5}{2}-s-\f3p}$.

Inserting \eqref{3.3} into \eqref{3.2}, we have
\be\label{3.4}\ba
\f12\f{d}{dt}\|\Lambda^{s}u\|^2_{L^{2}(\mathbb{R}^{3})}
+\|\Lambda^{s+1}u\|^2_{L^{2}(\mathbb{R}^{3})}\leq& C
\|\nabla u\|^{\f{\f{3}{p}}{\f52-s-\f3p}+1}_{L^{p}(\mathbb{R}^{3})}\|\Lambda^{s}u\|^{\f{ \f52-s-\f6p}{\f52-s-\f3p}+1}_{L^{2}(\mathbb{R}^{3})}\\
\leq& C
\|\nabla u\|^{q}_{L^{p}(\mathbb{R}^{3})}\|\Lambda^{s}u\|^{\f{ \f52-s-\f6p}{\f52-s-\f3p}+1}_{L^{2}(\mathbb{R}^{3})}.
\ea\ee
Thanks to
$\f{ \f52-s-\f6p}{\f52-s-\f3p}\leq1$,  we derive from \eqref{3.4} and  $\nabla u\in L^{q}(0,T;L^{p}(\mathbb{R}^{3}))$ for $2\leq\f{2}{q}+\f{  3}{p}\leq\f52$ that $u\in L^{2}(0,T;H^{1+s}(\mathbb{R}^{3}))$.
\end{proof}
\begin{lemma}\label{lem4.3}  Let $u$ be a suitable weak solution belonging in $\nabla u\in L^{q}(0,T;L^{p}(\mathbb{R}^{3}))$ for $2\leq\f{2}{q}+\f{  3}{p}\leq\f52$ with $2-\f3p-\f1q\geq0,\f32<p<\f{12}{7},q\geq4$.  Then
$$u\in L^{\infty}(0,T;L^{2}(\mathbb{R}^{3}))\cap  L^{2}(0,T;\dot{H}^{1+s}(\mathbb{R}^{3})),$$
where $\wred{0\leq}s=\f{2-\f3p-\f1q}{\f2q}\wred{\leq \frac{1}{2}}$.
 \end{lemma}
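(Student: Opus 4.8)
The plan is to run the same $\dot{H}^{s}$-energy method used for Lemma \ref{lem4.2}, adapting the interpolation to the range $p<2$. First I would test \eqref{NS} with $\Lambda^{2s}u$ and use $\Div u=0$ together with the cancellation \eqref{3.1} to obtain the identity
\[
\tfrac12\tfrac{d}{dt}\|\Lambda^{s}u\|^{2}_{L^{2}}+\|\Lambda^{s+1}u\|^{2}_{L^{2}}=-\langle\Lambda^{s}(u\cdot\nabla u)-u\cdot\nabla\Lambda^{s}u,\Lambda^{s}u\rangle,
\]
exactly as in the proof of Lemma \ref{lem4.2}. As there, the Kato--Ponce commutator estimate \eqref{KPC} is the tool for the right-hand side; the new point is that for $\tfrac32<p<\tfrac{12}{7}<2$ the conjugate exponent $\tfrac{2p}{p-2}$ appearing in Lemma \ref{lem4.2} is no longer admissible, so the Lebesgue bookkeeping must be redistributed.

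Concretely, I would pair the commutator with $\Lambda^{s}u$ through H\"older at a level $a<2$, writing $|\langle\,\cdot\,,\Lambda^{s}u\rangle|\le\|\Lambda^{s}(u\cdot\nabla u)-u\cdot\nabla\Lambda^{s}u\|_{L^{a}}\|\Lambda^{s}u\|_{L^{a'}}$ with $a'>2$, and then apply \eqref{KPC} (using $\Lambda^{s-1}\nabla u\approx\Lambda^{s}u$) to bound the commutator by $\|\nabla u\|_{L^{p}}\|\Lambda^{s}u\|_{L^{m}}$ with $\tfrac1a=\tfrac1p+\tfrac1m$. This produces two factors $\|\Lambda^{s}u\|_{L^{m}}$ and $\|\Lambda^{s}u\|_{L^{a'}}$, both at integrability larger than $2$. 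Each is handled by the fractional Gagliardo--Nirenberg inequality, interpolating between the conserved quantity $\|u\|_{L^{2}}$ (controlled in $L^{\infty}_{t}L^{2}_{x}$ by the global energy inequality, since $u$ is a suitable weak solution) and the dissipation $\|\Lambda^{s+1}u\|_{L^{2}}$ produced on the left, possibly also using $\|\nabla u\|_{L^{p}}$ as top anchor as in \eqref{3.3}.

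The estimate is then closed in two steps. In space, Young's inequality absorbs the $\|\Lambda^{s+1}u\|_{L^{2}}$ factors into the dissipation term; this is legitimate precisely because the window $\tfrac32<p<\tfrac{12}{7}$ keeps the total power of $\|\Lambda^{s+1}u\|_{L^{2}}$ strictly below $2$. In time, I would combine H\"older with the two available time-integrabilities, namely the supercritical hypothesis $\nabla u\in L^{q}(0,T;L^{p})$ and the basic energy bounds $\nabla u\in L^{2}(0,T;L^{2})$, $u\in L^{\infty}(0,T;L^{2})$. A Gr\"onwall argument on $\|\Lambda^{s}u\|_{L^{2}}^{2}$ then yields $u\in L^{\infty}(0,T;\dot{H}^{s})\cap L^{2}(0,T;\dot{H}^{s+1})$, and the global energy bound upgrades $\dot{H}^{s}$ to the stated $u\in L^{\infty}(0,T;L^{2})\cap L^{2}(0,T;\dot{H}^{1+s})$. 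The definition $s=\tfrac{2-3/p-1/q}{2/q}$ and the hypotheses $2-\tfrac3p-\tfrac1q\ge0$, $2\le\tfrac2q+\tfrac3p\le\tfrac52$, $q\ge4$ are exactly what force $0\le s\le\tfrac12$ and make every interpolation exponent lie in $[0,1]$ with integrable time exponents.

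The main obstacle is the exponent bookkeeping, and it is genuinely harder than in Lemma \ref{lem4.2}. There, a scaling count shows that interpolating only between $\|\nabla u\|_{L^{p}}$ and $\dot{H}^{s}$ forces the power of $\|\nabla u\|_{L^{p}}$ to equal $q$, so the nonlinear term closes without ever touching the dissipation. For the present value of $s$ the same scaling count produces a power of $\|\nabla u\|_{L^{p}}$ different from $q$, so one is compelled to spend the dissipation $\|\Lambda^{s+1}u\|_{L^{2}}$ and the conserved $\|u\|_{L^{2}}$, and to glue the two time-integrabilities. Verifying simultaneously that (i) all Gagliardo--Nirenberg weights are admissible, (ii) the dissipation power stays below $2$ so Young's inequality applies, and (iii) the residual time exponent is integrable against $\nabla u\in L^{q}_{t}L^{p}_{x}$ is where the precise window $\tfrac32<p<\tfrac{12}{7}$, $q\ge4$ comes from; I expect the endpoint $\tfrac{12}{7}$ to emerge from the admissibility of these exponents, in analogy with the way $\tfrac{54+12\sqrt{14}}{25}$ arises in Lemma \ref{lem4.2}. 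A final technical point is to justify the formal $\dot{H}^{s}$ estimate for a suitable weak solution by carrying it out on a regularization and passing to the limit.
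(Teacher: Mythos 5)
Your overall strategy --- a fractional energy estimate at level $\dot H^{s}$, Kato--Ponce, fractional Gagliardo--Nirenberg, absorption of the dissipation by Young, and H\"older in time --- is the paper's strategy, and several of your structural observations are correct: the exponent $\frac{2p}{p-2}$ from Lemma \ref{lem4.2} is indeed inadmissible for $p<2$, the dissipation must now be spent, and the requirement that the total power of $\|\Lambda^{s+1}u\|_{L^{2}}$ stay below $2$ is exactly the condition $p>\frac32$. The paper differs only cosmetically in the algebra of the nonlinear term: it integrates by parts to write it as $\langle\Lambda^{s}(u\otimes u),\Lambda^{s+1}u\rangle$, bounds $\|\Lambda^{s}(u\otimes u)\|_{L^{2}}$ by the Kato--Ponce product estimate \eqref{KPI}, and then needs only one Gagliardo--Nirenberg interpolation instead of your two.

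There is, however, a genuine gap in the step you leave unverified, and it is not mere bookkeeping: the choice of anchors in the Gagliardo--Nirenberg step determines, by scaling, the time exponent of $\|\nabla u\|_{L^{p}}$ that survives after Young, and your lead choice does not close. If you interpolate $\|\Lambda^{s}u\|_{L^{m}}$ and $\|\Lambda^{s}u\|_{L^{a'}}$ between $\|u\|_{L^{2}}$ and $\|\Lambda^{s+1}u\|_{L^{2}}$, the total dissipation power is $\frac{2s+3/p}{s+1}$ and Young leaves $\|\nabla u\|_{L^{p}}^{2(s+1)/(2-3/p)}$; since $2s+1=q\,(2-\frac3p)$, this time exponent equals $q+\frac{1}{2-3/p}>q$. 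This deficit cannot be repaired by interpolating with the energy bound $\nabla u\in L^{2}(0,T;L^{2})$: because $p<2$, interpolation of $L^{q}L^{p}$ with $L^{2}L^{2}$ only yields spaces $L^{r}L^{\tilde p}$ with $\tilde p\ge p$ and $r\le q$, never $L^{r}L^{p}$ with $r>q$. The correct choice --- which you mention only as a ``possibly'' --- is the one the paper makes in \eqref{wu4.7}: take the low anchor to be $\|u\|_{L^{3p/(3-p)}}\le C\|\nabla u\|_{L^{p}}$ (Sobolev, valid since $\frac65<p<3$); then scaling forces the surviving time exponent to be exactly $\frac{2s+1}{2-3/p}=q$, and the estimate closes by Young applied to the time-integrated inequality, with no Gr\"onwall and no recourse to $L^{2}L^{2}$. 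Finally, the endpoint $p<\frac{12}{7}$ does not come from Young or from time integrability, as you conjecture, but from the admissibility condition $\frac{s}{s+1}<\frac{2-\frac3p}{s-\frac32+\frac3p}$ for the fractional Gagliardo--Nirenberg inequality \eqref{wu4.7}; your two-interpolation scheme would have to verify the analogous constraint for each factor.
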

 \begin{proof}
In view of the standard energy estimate, the integration by parts and the incompressible condition, we have
$$
\f12\f{d}{dt}\|\Lambda^{s}u\|^2_{L^{2}(\mathbb{R}^{3})}+\|\Lambda^{s+1}u\|^2_{L^{2}(\mathbb{R}^{3})}= \langle\Lambda^{s}(u\otimes u) ,\Lambda^{s+1}u\rangle.
$$
It follows from the H\"older inequality that
$$|\langle\Lambda^{s}(u\otimes u) ,\Lambda^{s+1}u\rangle|\leq \|\Lambda^{s}(u\otimes u)\|_{L^{2}(\mathbb{R}^{3})}\|\Lambda^{s+1}u\|_{L^{2}(\mathbb{R}^{3})}.$$
We deduce \wred{from} the  Kato-Ponce   product estimates \eqref{KPI}  and the Sobolev embedding that
$$\|\Lambda^{s}(u\otimes u)\|_{L^{2}(\mathbb{R}^{3})}\leq C\|\Lambda^{s}u\|_{L^{\f{6p}{5p-6}}(\mathbb{R}^{3})}\|u\|_{L^{\f{3p}{3-p}}(\mathbb{R}^{3})}\leq C\|\Lambda^{s}u\|_{L^{\f{6p}{5p-6}}(\mathbb{R}^{3})}\|\nabla u\|_{ L^{p}(\mathbb{R}^{3})}, \wred{\;\f65<p<3.}$$
Combining the above estimates together, we observe that
\begin{equation}\label{wu4.6}
  \f12\f{d}{dt}\|\Lambda^{s}u\|^2_{L^{2}(\mathbb{R}^{3})}
+\|\Lambda^{s+1}u\|^2_{L^{2}(\mathbb{R}^{3})}\leq C\|\Lambda^{s}u\|_{L^{\f{6p}{5p-6}}(\mathbb{R}^{3})}\|\nabla u\|_{ L^{p}(\mathbb{R}^{3})}\|\Lambda^{s+1}u\|_{L^{2}(\mathbb{R}^{3})}\wred{.}
\end{equation}
According to the fractional Gagliardo-Nirenberg inequality and the Sobolev inequality, we discover  that
\begin{equation}\label{wu4.7}
\|\Lambda^{s}u\|_{L^{\f{6p}{5p-6}}(\mathbb{R}^{3})}\leq C\|u\|_{L^{\f{3p}{3-p}}(\mathbb{R}^{3})}^{\f{2-\f3p}{s-\f32+\f3p}}
 \|\Lambda^{s+1}u\|_{L^{2}(\mathbb{R}^{3})}
^{\f{s-\f72+\f6p}{s-\f32+\f3p}}\leq C
 \|\nabla u\|_{ L^{p}(\mathbb{R}^{3})}^{\f{2-\f3p}{s-\f32+\f3p}}
 \|\Lambda^{s+1}u\|_{L^{2}(\mathbb{R}^{3})}
^{\f{s-\f72+\f6p}{s-\f32+\f3p}},
\end{equation}
where we need
$p\geq\f32,0\leq\f{s-\f72+\f6p}{s-\f32+\f3p}\leq1$, $s-\f32+\f3p>0$ and $ \f{s}{s+1}<\f{2-\f3p}{s-\f32+\f3p} $.\\
On one hand, we can examine
$\f{2-\f3p}{s-\f32+\f3p}\leq1$ via  $q\geq 4$ and $ \wred{3>p\geq\f32}$. On the other hand, direct calculation
ensures that $q>2, p>\f32$ yields that $s-\f32+\f3p>0$. Moreover, $p<\f{12}{7} $ means  $ \f{s}{s+1}<\f{2-\f3p}{s-\f32+\f3p}. $

\wred{Inserting \eqref{wu4.7} into \eqref{wu4.6}}, we find
$$\f12\f{d}{dt}\|\Lambda^{s}u\|^2_{L^{2}(\mathbb{R}^{3})}+\|\Lambda^{s+1}u\|^2_{L^{2}(\mathbb{R}^{3})}\leq C
\|\nabla u\|^{\f{s+\f12}{s-\f32+\f3p}}_{L^{p}(\mathbb{R}^{3})}\|\Lambda^{s+1}u\|^{\f{ s-\f72+\f6p}{s-\f32+\f3p}+1}_{L^{2}(\mathbb{R}^{3})},
$$
which implies that
$$
\|\Lambda^{s}u\|^2_{L^{\infty}(0,T;L^{2}(\mathbb{R}^{3}))}
+\|\Lambda^{s+1}u\|^2_{L^{2}(0,T;L^{2}(\mathbb{R}^{3}))}\leq \wred{C_0+}C\|\nabla u\|^{\f{s+\f12}{s-\f32+\f3p}}_{L^{\f{2s+1}{2-\f3p}}(0,T;L^{p}(\mathbb{R}^{3}))}\|\Lambda^{s+1}u\|^{\f{ s-\f72+\f6p}{s-\f32+\f3p}+1}_{L^{2}(0,T;L^{2}(\mathbb{R}^{3}))}.
$$
The proof of this lemma is completed.
 \end{proof}
\begin{proof}[Proof of Corollary \ref{coro1.6}]
Combining Lemma  \ref{lem4.2}, Lemma  \ref{lem4.3} and Theorem \ref{the1.4}, we immediately finish the proof of Corollary \ref{coro1.6}.
 \end{proof}
		\section*{Acknowledgement}
 The authors would like to express their sincere gratitude to Dr. Wei Wei at Northwest University, for the  discussion  involving the inequality \eqref{3.3} and \eqref{wu4.7}.
The research of Wang was partially supported by  the National Natural Science Foundation of China (No. 11971446 and  No. 11601492).
 The research of Wu was partially supported by the National Natural Science Foundation of China under grant No. 11771423.

	\end{document}